\newcommand*{\fullref}[1]{\hyperref[{#1}]{\ref*{#1}. \nameref*{#1}}} 
\definecolor{due}{RGB}{0,76,147}
\theoremstyle{definition}
\newtheorem{defi}{Definition}[section]
\theoremstyle{plain}
\newtheorem{thm}[defi]{Theorem}
\newtheorem{prop}[defi]{Proposition}
\newtheorem{cor}[defi]{Corollary}
\newtheorem{lemma}[defi]{Lemma}
\theoremstyle{remark}
\newtheorem{conj}[defi]{Conjecture}
\newtheorem{ex}[defi]{Example}
\theoremstyle{definition}
\newtheorem*{ack}{Acknowledgement}
\newcommand{\ra}{\rightarrow}
\newcommand{\longra}{\longrightarrow}
\newcommand{\ka}{{\mathcal A}}
\newcommand{\kh}{{\mathcal H}}
\newcommand{\kl}{{\mathcal L}}
\newcommand{\ko}{{\mathcal O}}
\newcommand{\ks}{{\mathcal S}}
\newcommand{\C}{{\mathbb C}}
\newcommand{\IF}{{\mathbb F}}
\newcommand{\IH}{{\mathbb H}}
\newcommand{\IN}{{\mathbb N}}
\newcommand{\Q}{{\mathbb Q}}
\newcommand{\R}{{\mathbb R}}
\newcommand{\Z}{{\mathbb Z}}
\newcommand{\rH}{{\rm H}}
\newcommand{\rHet}[1]{{\rm H}_{\rm \acute{e}t}^{#1}}
\newcommand{\rHcr}[1]{{\rm H}_{\rm crys}^{#1}}
\newcommand{\Nwt}{{\rm Nwt}}
\newcommand{\NS}{\operatorname{NS}}
\newcommand{\rank}{\operatorname{rank}}
\newcommand{\End}{\operatorname{End}}
\newcommand{\Hom}{\operatorname{Hom}}
\newcommand{\NL}{\mathrm{NL}}
\newcommand{\md}{{\;}\text{mod}{\;}}
\newcommand{\xdashrightarrow}[2][]{\ext@arrow 0359\rightarrowfill@@{#1}{#2}}
\newcommand{\xdashleftarrow}[2][]{\ext@arrow 3095\leftarrowfill@@{#1}{#2}}
\newcommand{\xdashleftrightarrow}[2][]{\ext@arrow 3359\leftrightarrowfill@@{#1}{#2}}
\def\rightarrowfill@@{\arrowfill@@\relax\relbar\rightarrow}
\def\leftarrowfill@@{\arrowfill@@\leftarrow\relbar\relax}
\def\leftrightarrowfill@@{\arrowfill@@\leftarrow\relbar\rightarrow}
\def\arrowfill@@#1#2#3#4{%
  $\m@th\thickmuskip0mu\medmuskip\thickmuskip\thinmuskip\thickmuskip
   \relax#4#1
   \xleaders\hbox{$#4#2$}\hfill
   #3$%
}
\begin{document}
	\title[On the Picard numbers of abelian varieties (char $p>0$)]{On the Picard numbers of abelian varieties\\in positive characteristic}
	\author{Roberto Laface}
	\dedicatory{Dedicated to my mother on the occasion of her 50th birthday}

\clearpage\maketitle
\thispagestyle{empty}
\begin{abstract}
In this paper, we study the set $R_g^{(p)}$ of possible Picard numbers of abelian varieties of dimension $g$ over algebraically closed fields of characteristic $p>0$. We show that many of the results for complex abelian varieties have analogues in positive characteristic: non-completeness in dimension $g \geq 2$, asymptotic completeness as $g \ra +\infty$, structure results for abelian varieties of large Picard number. On the way, we highlight and discuss new characteristic $p>0$ features and pathologies: non-additivity of the range of Picard numbers, supersingularity index of an abelian variety, dependence of $R_g^{(p)}$ on $p$, relation to the $p$-rank and the Newton polygon.
\end{abstract}

\setcounter{tocdepth}{1}

\section{Introduction}

Let $g \geq 1$ be an integer. In this note we would like to study the set $R_g^{(p)}$ of possible Picard numbers of abelian varieties of dimension $g$ over algebraically closed fields of characteristic $p > 0$. This is a continuation of the earlier work \cite{hulek-laface17}. 

For a smooth projective variety $X$ over an algebraically closed field $K$, the \textit{N\'eron-Severi group} $\NS(X)$ of $X$ is the set of divisors modulo algebraic equivalence. By Severi's Theorem of the base, it is known that $\NS(X)$ is a finitely generated abelian group, and its rank $\rho(X) := \rank \NS(X)$ is called the \textit{Picard number} of $X$. The Picard number is notably one of the most difficult invariants of an algebraic variety to calculate. In order to compute it, one should find enough codimension-one cycles to get a strong lower bound on $\rho(X)$, and at the same time look for an upper bound on $\rho(X)$. In some special cases, this can be achieved by finding automorphisms or special subvarieties (e.g.~lines on surfaces) on $X$.

In characteristic zero, the Lefschetz Theorem on (1,1)-classes implies that the Picard number of $X$ satisfies the inequality $1 \leq \rho(X) \leq h^{1,1}(X)$. For many classes of surfaces all such values actually occur: for instance, all integers $\rho$ such that $1 \leq \rho \leq 20$ (respectively, $1 \leq \rho \leq 4$) do occur as the Picard number of some (projective) K3 surface (respectively, abelian surface). 

Moving to positive characteristic instead, one sees that jumping phenomena for the Picard number are known to occur already in the case of surfaces. In this respect, K3 surfaces again provide an example: by a result of Igusa \cite{igusa60}, the Picard number of an algebraic surface $X$ over an algebraically closed field of characteristic $p>0$ satisfies the inequality $\rho(X) \leq b_2(X)$. In particular, for a K3 surface $X$, we have that $\rho(X) \leq 22$. However, one can never have $\rho(X) =21$, as remarked by Artin \cite{artin74}. 

In fact, as we will later show in detail, jumping phenomena occur also for abelian surfaces. This latter observation was the leading motivation behind our note. Our aim is to study the possible Picard numbers of abelian varieties of a fixed dimension $g$.\newline

If $X$ is an abelian variety of dimension $g$ and $\ell$ is a prime different from the characteristic of the base field, it is well-known that the \'etale cohomology groups $\rHet{n}(X,\Z_\ell)$ are determined by $\rHet{1}(X,\Z_\ell)$ by taking exterior powers. In particular, we have that $b_2(X) = {2g \choose 2} = 2g^2 - g$, from which we conclude that $\rho(X) \leq 2g^2-g$. We are then interested in studying which integers $\rho$ such that $1 \leq \rho \leq 2g^2-g$ occur as the Picard number of an abelian variety of dimension $g$.

Our strategy closely follows that of \cite{hulek-laface17}. As the Picard number of abelian varieties is invariant under isogenies, we can use the Poincar\'e Complete Reducibility Theorem \cite[IV, \S{19}, Theorem 1]{mumford08} to choose a suitable representative in the isogeny class. Then, as the Picard number is an additive function (but not strongly additive) \cite[IV, \S{21}, Theorem 6]{mumford08}, a result of Murty \cite[Lemma 3.3]{murty84} (which is independent of the characteristic of the base field) allows us to give an algorithm to compute the Picard number of any abelian variety.

This observation paves the way towards the study of the sets $R_g^{(p)}$, about which very little is known. Many of the results obtained for complex abelian varieties in \cite{hulek-laface17} carry over to positive characteristic, but there are also several important new features.\newline

As one might expect, supersingular abelian varieties play a major role in this new setting. This is motivated by the fact that supersingular elliptic curves belong to a unique isogeny class, and it allows us to define a new invariant, the supersingularity index, which will be one of our fundamental tools and will be closely related to the Picard number.

As over the complex numbers, there exist gaps in the set of possible Picard numbers of abelian varieties of dimension $g$. However, in positive characteristic, this is already true for $g=2$, while over $\C$ this phenomenon starts occurring in dimension $g \geq 3$ \cite{hulek-laface17}. After the bulk of the present work had been written, we were informed by Matthias Sch\"utt of a paper of Shioda \cite{shioda75}, in which he notices the existence of gaps for the Picard numbers of complex abelian threefolds and of abelian surfaces in positive characteristic, see \cite[Appendix]{shioda75}.

Another surprising difference with the characteristic zero case is the fact that the range of Picard numbers is not additive, that is there is no analogue of \cite[Proposition 6.1]{hulek-laface17} in any positive characteristic. This is intimately connected to the existence of supersingular abelian varieties, and we will later exhibit examples of this phenomenon.

Among the non-results, we have to mention the combinatorial structure of $R_g^{(p)}$ (over $\C$, this is \cite[Section 7.2]{hulek-laface17}). The main obstruction to such a result is the lack of existence results for abelian varieties with given endomorphism algebra. Also, the existence of abelian varieties with certain endomorphism algebras depends on the characteristic of the base field. We hope to pursue results in this direction in the future.

Nevertheless, we were able to recover asymptotic completeness for the Picard numbers of abelian varieties, and results on the structure of abelian varieties with large Picard number, which in positive characteristic have consequences on the $p$-rank and the Newton polygon.

	\begin{ack}
		I would like to warmly thank Bert van Geemen, Klaus Hulek, Christian Liedtke, Ben Moonen and Rachel Pries for their comments on the manuscript, and Matthias Sch\"utt for his comments and especially for pointing out Shioda's paper \cite{shioda75}. Special thanks go to Oliver Gregory for many fruitful discussions and detailed suggestions on how to improve earlier drafts of this paper. This research has been funded by the ERC Consolidator Grant 681838 K3CRYSTAL.
	\end{ack}


\section{An algorithm to compute the Picard number}

\subsection{Isogeny decomposition}
Let $p$ be a prime number and $g$ a positive integer. We are interested in studying the set $R_g^{(p)}$ of Picard numbers of abelian varieties of dimension $g$ in characteristic $p$, namely
\[ R_g^{(p)} := \big\lbrace \rho(X) \, \big\vert \, \text{$X/K$ abelian variety}, \, \dim X = g, \, \operatorname{char} K = p \big\rbrace. \]
Here, the base field $K$ is allowed to vary among all algebraically closed fields of characteristic $p$. As we will work fixing the characteristic, we will use the simplified notation $R_g$ in place of $R_g^{(p)}$ whenever there is no risk of confusion.

Recall that if $E$ is an elliptic curve over $k$, the absolute Frobenius automorphism $F$ acts on the cohomology group $\rH^1(E, \ko_E)$ by pull-back. Since $h^1(\ko_E)=1$, the $F$-linear map $F^*$ is either zero or bijective. Whenever $F^*=0$, we say that $E$ is \textit{supersingular}. If now $X/k$ is an abelian variety, we say that $X$ is \textit{supersingular} if it is isogenous to a product $E_1 \times \cdots \times E_g$, where $g = \dim X$ and the $E_i$'s are supersingular elliptic curves (this definition is a consequence of \cite{oort74}). 

Any two supersingular elliptic curves are isogenous: this follows, for instance, from \cite[Ch.~13, Theorem 6.4]{husemoller04} together with \cite[Ch.~13, Theorem 8.4]{husemoller04}. In light of the above results, we choose a supersingular elliptic curve $E$, so that every supersingular abelian variety of dimension $g \geq 1$ is isogenous to $E^g$ (and we keep this choice for the remainder of this article). We will shortly see that the existence of a unique isogeny class of elliptic curves in characteristic $p$ has deep consequences for the type of results that we are seeking.
 
The Poincar\'e complete reducibility theorem \cite[Theorem 5.3.7]{birkenhake-lange04} we resorted in \cite{hulek-laface17} holds also for abelian varieties over arbitrary algebraically closed fields \cite[IV, {\S}19, Theorem 1]{mumford08}. Then, given an abelian variety $X$ of dimension $g$, we can decompose it in its isogeny class as 
	\[ X \sim X_1^{n_1} \times \cdots \times X_r^{n_r} \times E^s, \]
	where 
\begin{itemize}
	\item $\Hom(X_i,E) = \Hom(X_i,X_j)=0$ for all $(i,j)$ with $i \neq j$;
	\item $s+\sum_{i=1}^r n_i \dim X_i = g$, $n_i>0$ for all $i$ and $s\geq 0$.
\end{itemize}

\begin{defi}
The non-negative integer $s \equiv s(X)$ is called the \textit{supersingularity index} of $X$. Equivalently, it is the dimension of the largest supersingular abelian subvariety of $X$.
\end{defi}
Notice that $s=0$ corresponds to the situation where $X$ does not contain any supersingular abelian subvariety. In this case, the Picard number should morally behave as it does over $\C$. The supersingularity index is a new feature that makes sense in positive characteristic only, and we will later study its interplay with the Picard number.

\subsection{Reduction steps}
	For abelian varieties, the Picard number is an isogeny invariant, hence we can replace a given abelian variety by the representative of its isogeny class given in the Poincar\'e Complete Reducibility Theorem \cite[IV, \S{19}, Theorem 1]{mumford08}. Indeed, letting $\varphi: X \longra Y$ be an isogeny of degree $d$, there exists an isogeny $\psi:Y \longra X$ such that $\psi \circ \varphi = [d]_X$ and $\varphi \circ \psi = [d]_Y$. In particular, one has a sequence of morphisms
\[ \NS(A) \xrightarrow{\psi^*} \NS(B) \xrightarrow{\varphi^*} \NS(A),\]
whose composition is $[d]^*_X$. As $[d]^*_X$ induces multiplication by $d^2$ on $\NS(A)$, we see that $\rho(A) \leq \rho(B)$, and the opposite inclusion is obtained by exchanging the role of $A$ and $B$. 
	
	Moreover, in \cite[Corollary 2.3]{hulek-laface17}, we observed that the Picard number of abelian varieties is additive (but not strongly additive, see Proposition \ref{murty}). The proof given therein only works in the complex setting. An account of this in full generality may be found (although implicit) in \cite[IV, \S{21}, Theorem 6]{mumford08}. However, it is not hard to come up with a proof valid over arbitrary fields of arbitrary characteristic by means of the Poincar\'e line bundle. In conclusion, we have the following:
	
	\begin{prop}[Additivity of the Picard number]\label{splitting}
		Let $X_1, \dots, X_r$ be non-pairwise isogenous simple abelian varieties over $k$, and let $n_1, \dots, n_r$ be positive integers. Then,
		\[ \rho\Bigg( \prod_{i=1}^r X_i^{n_i} \Bigg) = \sum_{i=1}^r \rho(X_i^{n_i}). \]
	\end{prop}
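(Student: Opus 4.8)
The plan is to reduce the statement to the key fact that, for non-pairwise-isogenous simple abelian varieties, the only line bundles on the product $\prod_i X_i^{n_i}$ come (up to algebraic equivalence) from the factors and from the Poincaré line bundles pairing each factor with its dual. First I would recall that for any two abelian varieties $A,B$ one has a canonical decomposition $\NS(A\times B)\cong \NS(A)\oplus\NS(B)\oplus\Hom(A,\widehat{B})$, where the last summand records the "mixed" classes via the identification $\Hom(A,\widehat{B})\cong\Hom(B,\widehat{A})$; this is exactly the content of \cite[IV, \S{21}, Theorem 6]{mumford08} applied through the Poincaré bundle, and it is characteristic-free. Iterating this over the factors of $\prod_{i=1}^r X_i^{n_i}$, one obtains
\[ \NS\!\Bigg(\prod_{i=1}^r X_i^{n_i}\Bigg)\ \cong\ \bigoplus_{i=1}^r \NS\big(X_i^{n_i}\big)\ \oplus\ \bigoplus_{i<j}\Hom\big(X_i^{n_i},\widehat{X_j^{n_j}}\big)^{\oplus ?}, \]
where the precise multiplicities in the cross terms will not matter.

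The crucial point is then that every cross term vanishes. Since $X_i$ and $X_j$ are simple and non-isogenous for $i\neq j$, we have $\Hom(X_i,X_j)=0$; and because $\widehat{X_j^{n_j}}$ is isogenous to $X_j^{n_j}$ (an abelian variety is isogenous to its dual), we get $\Hom(X_i^{n_i},\widehat{X_j^{n_j}})=0$ as well — a homomorphism from a power of $X_i$ to something isogenous to a power of $X_j$ must be zero by simplicity. Hence all the mixed summands disappear and we are left with
\[ \NS\!\Bigg(\prod_{i=1}^r X_i^{n_i}\Bigg)\ \cong\ \bigoplus_{i=1}^r \NS\big(X_i^{n_i}\big). \]
Taking ranks yields $\rho\big(\prod_i X_i^{n_i}\big)=\sum_i \rho\big(X_i^{n_i}\big)$, which is the claim. (Note this is additivity, not strong additivity: the individual $\rho(X_i^{n_i})$ need not equal $n_i^2\rho(X_i)$, precisely because $\End(X_i)$ can be larger than $\Z$; that refinement is the subject of Proposition~\ref{murty}.)

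The main obstacle, and the only place requiring genuine care, is justifying the iterated $\NS$-decomposition over an arbitrary algebraically closed field: the clean statement $\NS(A\times B)\cong\NS(A)\oplus\NS(B)\oplus\Hom(A,\widehat B)$ is classical but is usually quoted over $\C$, so I would instead give the argument directly via the seesaw theorem and the Poincaré line bundle $\mathcal P$ on $A\times\widehat A$. Concretely, a line bundle $L$ on $A\times B$ is determined up to algebraic equivalence by its restrictions to $A\times\{0\}$ and $\{0\}\times B$ together with the morphism $B\to\widehat A$ (equivalently $A\to\widehat B$) encoding how $L$ varies; the theorem of the cube makes this assignment additive, and one checks it is an isomorphism onto $\NS(A)\oplus\NS(B)\oplus\Hom(B,\widehat A)$ by exhibiting an explicit inverse built from pullbacks along projections and from $(\id_A\times f)^*\mathcal P$ for $f\in\Hom(B,\widehat A)$. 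All of this is in \cite{mumford08} and holds over any algebraically closed field; once it is in hand, the reduction above is immediate and purely formal.
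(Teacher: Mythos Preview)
Your proposal is correct and follows precisely the approach the paper sketches: the paper does not give a proof, but points to \cite[IV, \S{21}, Theorem 6]{mumford08} and remarks that ``it is not hard to come up with a proof valid over arbitrary fields of arbitrary characteristic by means of the Poincar\'e line bundle.'' You have carried out exactly this, using the decomposition $\NS(A\times B)\cong\NS(A)\oplus\NS(B)\oplus\Hom(A,\widehat B)$ and observing that the cross terms vanish by simplicity and non-isogeny of the factors; if anything, you have supplied more detail than the paper itself.
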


	Thus, in order to compute $\rho(X)$, we are left with computing $\rho(A^k)$ for $A$ a simple abelian variety and $k \geq 1$. In \cite[Lemma 3.3]{murty84}, Murty states without giving a proof such a result for complex abelian varieties. In \cite{hulek-laface17}, the authors applied Murty's result to compute 
	the Picard number of a self-product of a complex abelian variety. In fact, it turns out that this result is independent of the complex setting\footnote{This fact is implicit in \cite[IV, \S{21}]{mumford08}.}. 
	
	Before stating the result, let us briefly recall the notation, and refer the reader to \cite{albert34a}, \cite{albert34}, \cite{albert35} for further details, or to \cite{birkenhake-lange04} for a modern account. If $(D,*)$ is an Albert algebra, let $K =Z(D)$ be its center, which is a number field. Set 
	\[e:=[K : \Q], \qquad d^2 :=[D:K] \qquad  e_0:=[K_0:\Q],\]
	where $K_0$ is the maximal totally real subfield of $K$. Then, the endomorphism algebras of simple abelian varieties are divided into four types:
	
	\begin{itemize}
		\item Type I$(e)$: $d=1$, $e=e_0$ and $D = K = K_0$ is a totally real field;
		\item Type II$(e)$: $d=2$, $e=e_0$ and $D$ is a totally indefinite quaternion algebra over the totally real field $K=K_0$;
		\item Type III$(e)$: $d=2$, $e=e_0$ and $D$ is a totally definite quaternion algebra over the totally real field $K=K_0$;
		\item Type IV$(e_0,d)$: $e=2e_0$ and $D$ is an algebra over the CM field $K \supset K_0$.
	\end{itemize}
	
	\begin{prop}[Lemma 3.3 of \cite{murty84}]\label{murty}
		Let $A$ be a simple abelian variety, and let $D=\End(A) \otimes \Q$. Then, for $k \geq 1$, one has
		\[
		\rho(A^k) = 
		\begin{cases} 
		\frac{1}{2}ek(k+1) & \text{\rm{Type I$(e)$}}\\
		ek(2k+1) & \text{\rm{Type II$(e)$}}\\
		ek(2k-1) & \text{\rm{Type III$(e)$}}\\
		e_0d^2k^2 & \text{\rm{Type IV$(e_0,d)$}}
		\end{cases} \quad = \quad
		\begin{cases} 
		\frac{1}{2}\rho(A)k(k+1) & \text{\rm{Type I$(e)$}}\\
		\frac{1}{3}\rho(A)k(2k+1) & \text{\rm{Type II$(e)$}}\\
		\rho(A)k(2k-1) & \text{\rm{Type III$(e)$}}\\
		\rho(A)k^2 & \text{\rm{Type IV$(e_0,d)$}}
		\end{cases}	.
		\]
	\end{prop}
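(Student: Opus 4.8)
The plan is to turn the statement into a dimension count in linear algebra over the division algebra $D$. First recall that for an abelian variety $X$ over an algebraically closed field, a polarization $\lambda\colon X\to\widehat{X}$ equips $\End(X)\otimes\Q$ with the associated Rosati involution $\varphi\mapsto\varphi^{\dagger}:=\lambda^{-1}\widehat{\varphi}\lambda$, and the assignment $L\mapsto\lambda^{-1}\phi_{L}$ identifies $\NS(X)\otimes\Q$ with the $\Q$-subspace $\{\varphi\in\End(X)\otimes\Q:\varphi^{\dagger}=\varphi\}$ of Rosati-symmetric elements (see, e.g., \cite[Chapter IV]{mumford08}). Crucially this identification is purely algebraic and valid over a base field of any characteristic --- it does not pass through Hodge theory --- which is exactly why the result is independent of the complex setting. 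Hence $\rho(X)=\dim_{\Q}\{\varphi\in\End(X)\otimes\Q:\varphi^{\dagger}=\varphi\}$, and since $\rho$ does not depend on the chosen polarization we may compute it using whatever $\lambda$ is most convenient.

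Next I would fix a polarization $\lambda_{0}$ of $A$, with Rosati involution $\dagger_{0}$ on $D=\End(A)\otimes\Q$, and take on $A^{k}$ the product polarization $\lambda=\mathrm{diag}(\lambda_{0},\dots,\lambda_{0})$. Under the identifications $\End(A^{k})\otimes\Q\cong M_{k}(D)$ and $\widehat{A^{k}}\cong\widehat{A}^{k}$, a direct computation with $\varphi\mapsto\lambda^{-1}\widehat{\varphi}\lambda$ shows that the Rosati involution on $M_{k}(D)$ is the conjugate transpose $(m_{ij})\mapsto(m_{ji}^{\dagger_{0}})$. So $\rho(A^{k})$ equals the $\Q$-dimension of the space of $\dagger_{0}$-Hermitian matrices in $M_{k}(D)$. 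Decomposing such a matrix into its $k$ diagonal entries --- each a free element of the symmetric part $D^{+}:=\{x\in D:x^{\dagger_{0}}=x\}$ --- and the $\binom{k}{2}$ entries strictly above the diagonal --- each a free element of $D$, which then determines its mirror below the diagonal --- gives the uniform formula
\[ \rho(A^{k})\;=\;k\cdot\dim_{\Q}D^{+}\;+\;\binom{k}{2}\dim_{\Q}D . \]

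It then remains to insert the values of $\dim_{\Q}D$ and $\dim_{\Q}D^{+}$ dictated by Albert's classification of $(D,\dagger_{0})$: the pair $(\dim_{\Q}D,\dim_{\Q}D^{+})$ equals $(e,e)$ in Type~I (where $\dagger_{0}=\mathrm{id}$), $(4e,3e)$ in Type~II, $(4e,e)$ in Type~III, and $(2e_{0}d^{2},e_{0}d^{2})$ in Type~IV (where $e=2e_{0}$). Substituting into the displayed identity and simplifying produces the first form of the answer stated in the Proposition. Finally, evaluating at $k=1$ gives $\rho(A)=\dim_{\Q}D^{+}$, which equals $e$, $3e$, $e$, $e_{0}d^{2}$ in the four cases respectively; rewriting each formula in terms of $\rho(A)$ (which is where the coefficients $\tfrac12$ and $\tfrac13$ appear, via the ratios $\dim_\Q D/\dim_\Q D^{+}=1,\tfrac43,4,2$) yields the second form.

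The one point that genuinely requires care --- and the only input beyond the elementary count above --- is the tabulation of $\dim_{\Q}D^{+}$, i.e. recognising the kind of the Rosati involution in each Albert type: the identity in Type~I; an orthogonal-type involution on the quaternion algebra in Type~II versus a symplectic (standard) one in Type~III, which is precisely what separates $3e$ from $e$; and an involution of the second kind, nontrivial on the centre, in Type~IV, whose fixed space has exactly half the $\Q$-dimension of $D$. These facts belong to Albert's structure theory of endomorphism algebras of simple abelian varieties, they are insensitive to the characteristic, and they are the only place where the type of $A$ really enters --- everything else is formal.
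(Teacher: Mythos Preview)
Your argument is correct and rests on the same core identification the paper uses --- that $\NS(X)\otimes\Q$ is the space of Rosati-symmetric endomorphisms --- but your execution is organized differently from the paper's sketch. The paper, following Mumford, first tensors $\End(A^k)\otimes\Q\cong M_k(D)$ with $\R$, then decomposes the resulting semisimple $\R$-algebra into its simple factors ($M_k(\R)^{\oplus e}$, $M_k(\IH)^{\oplus e}$, $M_k(\C)^{\oplus e_0}$ according to type), and finally counts the dimension of the space of symmetric/Hermitian matrices in each factor. You instead stay over $\Q$ throughout and derive the single uniform identity $\rho(A^k)=k\dim_\Q D^{+}+\binom{k}{2}\dim_\Q D$ by the diagonal/off-diagonal split of a $\dagger_0$-Hermitian matrix, and only then plug in the Albert data $(\dim_\Q D,\dim_\Q D^{+})$ type by type. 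Your route is a bit more economical --- it avoids the extension of scalars and the structure theory of real semisimple algebras --- and it makes transparent why the second column of formulas follows immediately from $\rho(A)=\dim_\Q D^{+}$. The paper's route has the virtue of making the geometric picture (symmetric spaces of Hermitian matrices over $\R$, $\C$, $\IH$) visible, which is closer to how the result is usually stated in the complex setting. Either way the only substantive input is the tabulation of $\dim_\Q D^{+}$ in each Albert type, which you correctly flag.
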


Murty does not give any proof of this fact, and thus we would like to give a hint on how this works. 

\begin{proof}[Sketch of the proof]
By \cite[IV, {\S}19, Corollary 2]{mumford08} we have that
\[ \End(A^k)\otimes \Q \cong M_{k}(D),\]
where $D := \End(A) \otimes \Q$ is a division algebra. Let us now suppose that $A$ is simple of Type I, that is $D = K = K_0$ is a totally real number field of degree $e$ over $\Q$, and the Rosati involution is trivial on $D$. By tensoring with $\R$, 
\[ \End(A^k) \otimes \R = M_k(D) \otimes \R \cong M_k(D \otimes \R) \cong M_k(\R)^{\oplus e}. \]
By the proof of \cite[IV, \S{21}, Theorem 6]{mumford08}, the Rosati involution $\dagger$ acts separately on each of the factors $M_k(\R)$, and we can also assume that it is given by $X \mapsto \bar{X}^t$ on each factor. Then,
\[ \NS(A^k) \otimes \R \cong \big( M_k(\R)^{\oplus e} \big)^\dagger \cong \big( M_k(\R)^{\dagger} \big)^{\oplus e} \cong \kh_k(\R)^{\oplus e},\]
where, following the notation in \cite{mumford08}, $\kh_k(\R)$ denotes the space of $k \times k$ symmetric matrices with real entries. It follows that $\rho(A^k) = \frac{1}{2}ek(k+1)$. For Type II, the computation is completely analogous, while for Type III (respectively, IV) we see that the factors of $\NS(A^k)$ are of the form $\kh_k(\IH)^{\oplus e}$, the space of Hermitian quaternionic matrices (respectively, $\kh_k(\C)^{\oplus e_0}$, the space of complex Hermitian matrices).
\end{proof}


\section{Bounds on the Picard number}

\subsection{Bounds for self-products of abelian varieties}\label{restriction}
We will now discuss how to bound the Picard number of an abelian variety $X$ isogenous to a self-product $A^k$, $A$ being a simple abelian variety. Already when $k=1$, the situation is different from the one in characteristic zero. In fact, over fields of characteristic $p >0$, the usual necessary conditions on the Picard number over $\C$ (see, for instance, \cite[Proposition 5.5.7]{birkenhake-lange04}) are replaced by the sensibly weaker following restrictions:

\begin{itemize}
	\item for Type I$(e)$, $e \vert g$;
	\item for Type II$(e)$, $2e \vert g$;
	\item for Type III$(e)$, $e \vert g$;
	\item for Type IV$(e_0,d)$, $e_0 d \vert g$.
\end{itemize}

	For a survey on these conditions, see \cite{oort88}. We will come back to these restrictions later on (see Section \ref{pathologies}), but for now we will use them to bound the Picard number of $A^k$, independently of the endomorphism type of $A$. 
	
\begin{prop}\label{self-product}
		Let $X$ be an abelian variety of dimension $g$ that is isogenous to a self-product of a simple abelian variety. Then,
		\[ \rho(X) \leq g^2, \]
		unless $X$ is supersingular, in which case $\rho(X) = 2g^2-g$.
\end{prop}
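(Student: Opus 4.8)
The plan is to write $X\sim A^k$ with $A$ a simple abelian variety of dimension $a$, so that $g=ka$; since the Picard number is an isogeny invariant, $\rho(X)=\rho(A^k)$, and the latter is given by Proposition \ref{murty}, whose value I would then estimate using the divisibility restrictions recalled above. The first observation is that the dichotomy in the statement is really one on the simple factor $A$: by uniqueness of the isogeny decomposition, the simple isogeny factors of $X$ are $E$ with multiplicity $g$ in one description and $A$ with multiplicity $k$ in the other, so $X$ is supersingular if and only if $A\sim E$; and since a simple abelian variety of dimension $\geq 2$ cannot be isogenous to the (non-simple) variety $E^a$, a simple supersingular abelian variety must be an elliptic curve.

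In the supersingular case we therefore have $A\sim E$ and $a=1$, hence $g=k$. Here $D=\End(E)\otimes\Q$ is a totally definite quaternion algebra over $\Q$, i.e.\ of Type III$(1)$ with $e=e_0=1$ and $d=2$, and Proposition \ref{murty} gives $\rho(X)=\rho(E^k)=ek(2k-1)=2k^2-k=2g^2-g$, as required; this strictly exceeds $g^2$ for $g\geq 2$, since $2g^2-g-g^2=g(g-1)>0$, while the two values agree when $g=1$. If instead $A$ is not supersingular, I would run through the four Albert types, using the restrictions $e\mid a$ (Type I), $2e\mid a$ (Type II), $e\mid a$ (Type III), $e_0d\mid a$ (Type IV) to bound the numerical parameters by $a$, and then a single elementary inequality in each case yields $\rho(A^k)\leq a^2k^2=g^2$: for Type I, $\tfrac{1}{2}ek(k+1)\leq\tfrac{1}{2}ak(k+1)\leq a^2k^2$ because $k+1\leq 2ka$; for Type II, $ek(2k+1)\leq\tfrac{1}{2}ak(2k+1)\leq a^2k^2$ because $2k+1\leq 2ka$ (and $a\geq 2$ here); for Type III, $ek(2k-1)\leq ak(2k-1)\leq a^2k^2$ because $2k-1\leq ka$ (here $a\geq 2$, since a one-dimensional factor of Type III is a supersingular elliptic curve, already excluded); and for Type IV, $e_0d^2k^2=(e_0d)\cdot d\cdot k^2\leq a\cdot a\cdot k^2=a^2k^2$, using $e_0d\leq a$ and hence $d\leq a$.

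I expect the only step that is not completely routine to be the Type IV estimate, where $d$ is a priori unbounded: it is precisely the characteristic-$p$ restriction $e_0d\mid a$ — weaker than the characteristic-zero condition $e_0d^2\mid a$, but still enough — that controls it. The rest is bookkeeping: keeping $\dim X=g$, $\dim A=a$ and $k$ straight, disposing of the case $g=1$ (where $\rho(E)=1=g^2=2g^2-g$), and using uniqueness of the isogeny decomposition once more to conclude that the two alternatives are exhaustive.
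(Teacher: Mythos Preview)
Your argument is correct and follows essentially the same route as the paper: reduce to $X\sim A^k$, apply Proposition~\ref{murty}, and bound each Albert type using the divisibility restrictions of Subsection~\ref{restriction}. The only cosmetic difference is that the paper splits into the cases $\dim A=1$ versus $\dim A\geq 2$ (deferring the latter Type-by-Type analysis to \cite{hulek-laface17}), whereas you split into supersingular versus non-supersingular and carry out the four estimates explicitly; your explicit inequalities are all sound, including the key Type~IV step $e_0d^2\leq (e_0d)\cdot d\leq a^2$.
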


\begin{proof}
		We can assume that $X= A^k$, with $A$ a simple abelian variety of dimension $n$ (so that $g=nk$). If $A=E$ is an elliptic curve, then $X= E^g$ and
		\[ \rho(E^g) = 
		\begin{cases}
		\frac{1}{2} g(g+1) & \text{$\End(E) \otimes \Q \cong \Q$},\\
		g^2 & \text{$\End(E) \otimes \Q \cong \Q(\sqrt{-d})$ \ (for some $d>0$)},\\
		2g^2 - g & \text{$E$ is supersingular.}
		\end{cases}
		\]
		Otherwise, $n \geq 2$, and $\rho(A^k) \leq g^2$ by a Type-by-Type analysis as in \cite{hulek-laface17}.
\end{proof}

\subsection{Bounding the Picard number by length}

	In \cite[Theorem 1.1]{hulek-laface17}, we showed that the range of Picard numbers of abelian varieties of dimension $g$ exhibits gaps already for $g \geq 3$. In this section, we would like to show that an analogous result holds in positive characteristic.
	
	In order to do so, we will introduce the {length} of (an isogeny decomposition of) a given abelian variety. If $X$ is an abelian variety and 
	\[ X \sim X_1^{n_1} \times \cdots \times X_{r}^{n_{r}} \]
	is an isogeny decomposition according to the Poincar\'e Complete Reducibilty Theorem, then we set $r(X):=r$ and we call $r(X)$ the \textit{length} of $X$. This quantity is certainly well-defined, as the factors and the powers appearing in the decomposition are determined up to isogenies and permutations.
	
	For $r \leq g$, we define the following positive integer:
	\[ M_{r,g} := \max \lbrace \rho(X) \, \vert \, \dim X = g, \ r(X) = r \rbrace. \]
	This is the maximum Picard number that can be realized by using an abelian variety of dimension $g$ and length $r$. Our first result concerns the value of $M_{r,g}$.
	
\begin{prop}\label{bound_length}
		For positive integers $r,g \in \IN$ such that $r \leq g$, one has that 
		\[M_{r,g}=  \big[ 2(g-r+1)^2 - (g-r+1) \big]+(r-1).\]
	This value is obtained as the Picard number of $E^{g-r-1} \times E_1 \times \cdots \times E_{r-1}$, where $E$ is supersingular, and the elliptic curves appearing in the product are not mutually pairwise isogenous.	
\end{prop}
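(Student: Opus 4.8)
The plan is to prove the two inequalities separately: that the displayed integer is realised by the abelian variety in the statement (so that $M_{r,g}$ is at least that large), and that no abelian variety of dimension $g$ and length $r$ exceeds it. For the lower bound I would take $X = E^{g-r+1}\times E_1\times\cdots\times E_{r-1}$ with $E$ the fixed supersingular elliptic curve and $E_1,\dots,E_{r-1}$ pairwise non-isogenous, non-supersingular elliptic curves. Such curves exist after enlarging the algebraically closed base field: one may take Legendre curves $E_i\colon y^2=x(x-1)(x-\lambda_i)$ over $\overline{\IF_p(\lambda_1,\dots,\lambda_{r-1})}$, whose $j$-invariants are algebraically independent over $\IF_p$, hence transcendental (so each $E_i$ is ordinary) and subject to no modular relation (so the $E_i$ are pairwise non-isogenous and none is isogenous to $E$). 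Since $E,E_1,\dots,E_{r-1}$ are then pairwise non-isogenous, this exhibits the Poincar\'e decomposition of $X$: it has dimension $(g-r+1)+(r-1)=g$ and length $r$, and by additivity (Proposition~\ref{splitting}), Proposition~\ref{self-product}, and $\rho(E_i)=1$ (every elliptic curve has $\NS\cong\Z$),
\[
\rho(X)=\rho(E^{g-r+1})+\sum_{i=1}^{r-1}\rho(E_i)=\big[2(g-r+1)^2-(g-r+1)\big]+(r-1).
\]

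For the upper bound, let $X$ be arbitrary with $\dim X=g$ and $r(X)=r$, and write its Poincar\'e decomposition $X\sim X_1^{n_1}\times\cdots\times X_r^{n_r}$ with the $X_i$ simple and pairwise non-isogenous. Put $m_i:=n_i\dim X_i$, so each $m_i\geq 1$ and $\sum_{i=1}^r m_i=g$. By additivity $\rho(X)=\sum_i\rho(X_i^{n_i})$, and Proposition~\ref{self-product} gives $\rho(X_i^{n_i})\leq m_i^2$ unless $X_i^{n_i}$ is supersingular, i.e.\ unless $X_i\cong E$, in which case $\rho(X_i^{n_i})=2m_i^2-m_i$. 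Because any two simple supersingular abelian varieties are isogenous (each being isogenous to $E$) and the $X_i$ are pairwise non-isogenous, at most one index --- say $i=1$ --- can be supersingular. Hence $\rho(X)$ is at most the maximum of $\sum_i m_i^2$, with the summand $m_1^2$ possibly replaced by $2m_1^2-m_1$, taken over the polytope of real $r$-tuples with $m_i\geq1$ and $\sum_i m_i=g$. This objective is convex, so its maximum is attained at a vertex of the polytope, namely at a tuple with $r-1$ entries equal to $1$ and one entry equal to $g-r+1$; comparing the two possibilities (the large entry being $m_1$ or not, and using $2t^2-t\geq t^2\geq t$ for $t=g-r+1\geq1$) shows the maximum equals exactly $\big[2(g-r+1)^2-(g-r+1)\big]+(r-1)$, realised when $m_1=g-r+1$. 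This matches the lower bound and proves the claimed value of $M_{r,g}$.

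I expect the one genuinely delicate point to be the input to the lower bound: over $\overline{\IF_p}$ there are too few isogeny classes of elliptic curves to find $r-1$ pairwise non-isogenous non-supersingular ones in general, so one must pass to a base field of positive transcendence degree, which is legitimate because $M_{r,g}$ is defined by ranging over all algebraically closed fields of characteristic $p$. The conceptual point to keep in mind is that supersingularity is ``worth more'' --- contributing $2m^2-m$ rather than $m^2$ to a block of dimension $m$ --- so one wants to concentrate dimension into a supersingular factor; since all supersingular elliptic curves lie in a single isogeny class, this forces that factor to be $E^{g-r+1}$, which is exactly why the extremal variety has the shape asserted in the statement.
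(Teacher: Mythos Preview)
Your proof is correct and follows essentially the same strategy as the paper's: reduce via additivity and Proposition~\ref{self-product} to maximising a convex function on the simplex $\{m_i\geq 1,\ \sum m_i=g\}$, then check that the extremal configuration is realised by $E^{g-r+1}\times E_1\times\cdots\times E_{r-1}$. The paper separates the cases $s(X)=0$ and $s(X)>0$ (invoking \cite[Proposition~3.1]{hulek-laface17} for the former) and argues the maximum via the paraboloid's vertex, whereas you treat both cases at once by observing that at most one simple factor can be supersingular and then appealing directly to convexity --- a cleaner packaging of the same idea.

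Two small points worth noting. First, you silently corrected the exponent $g-r-1$ in the statement to $g-r+1$; the paper's own proof also uses $g-r+1$, so this is just a typo in the displayed formula. Second, your care about the existence of $r-1$ pairwise non-isogenous non-supersingular elliptic curves (by passing to a base field of positive transcendence degree) fills in a point the paper leaves implicit; since $R_g^{(p)}$ is defined by ranging over all algebraically closed fields of characteristic $p$, this is legitimate and your Legendre-curve construction works.
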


	\begin{proof}
		Suppose $X$ is an abelian variety of dimension $g$ and length $r$. We must distinguish two cases, according to whether the abelian variety we consider contains supersingular abelian subvarieties. If $X$ does not contain any supersingular abelian subvariety, then
		\[ X \sim X_1^{n_1} \times \cdots \times X_r^{n_r} \quad \text{and} \quad \rho(X) \leq \sum_{i=1}^r \big( n_i \cdot \dim X_i \big)^2.\]
		By \cite[Proposition 3.1]{hulek-laface17}, $\rho(X) \leq [g-(r-1)]^2+(r-1)$. Therefore, we can assume that $X$ contains a supersingular abelian subvariety. In this case, for a fixed supersingular elliptic curve $E$, one has the isogeny decomposition
		\[ X \sim X_1^{n_1} \times \cdots \times X_{r-1}^{n_{r-1}} \times E^s. \]
		By Proposition \ref{splitting} and Proposition \ref{self-product}, $\rho(X) \leq \sum_{i=1}^{r-1} \big( n_i \dim X_i \big)^2 + (2s^2-s)$. Now, consider the function 
		\[ f(\underline{x}) := x_1^2 + \cdots + x_{r-1}^2 + (2x_r^2 - x_r) \]
		in the domain $\Omega := \lbrace \underline{x}=(x_1, \dots, x_r) \in \R^r {\,} \vert {\,} x_1 + \cdots + x_r = g, \ x_i \geq 1 \ \forall i \rbrace$. $M_{r,g}$ is bounded from above by $M:=\max_{\underline{x} \in \Omega} \lbrace f(\underline{x}) \rbrace$, which we now compute.
		
		For this, just notice that $(y-f(\underline{x})=0)$ defines a paraboloid in $\R^{r+1}$, and that the maximum of $f$ on $\Omega$ is attained at the point of $\Omega$ that is furthest from the vertex of the paraboloid. As $f$ is symmetric in $x_1, \dots, x_r$ we only need to check the values of $f$ at the points $(1,\dots, 1, g-r+1)$ and $(g-r+1, 1, \dots, 1)$. It follows that 
		\[M = \big[ 2 (g-r+1)^2 - (g-r+1) \big]+(r-1).\]
		Finally, we can easily compute that if $Z:= E^{g-r-1} \times E_1 \times \cdots \times E_{r-1}$, where $E$ is supersingular and the elliptic curves appearing in the product are not mutually pairwise isogenous, then $\rho(Z) = M$. This shows that $M_{r,g} = M$, and we are done.
	\end{proof}
	
	\begin{cor}
		Let $X$ be an abelian variety of dimension $g$ and length $r$. Then,
		\[ \rho(X) = M_{r,g} \Longleftrightarrow X \sim E^{g-r-1} \times E_1 \times \cdots \times E_{r-1},\]
		where $E$ is supersingular and $\Hom(E,E_i) = \Hom(E_i,E_j)=0$ for all $i \neq j$.	
	\end{cor}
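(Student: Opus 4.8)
The plan is to upgrade the inequality chain in the proof of Proposition~\ref{bound_length} to a characterization of the equality case. Starting from an abelian variety $X$ of dimension $g$ and length $r$ with $\rho(X) = M_{r,g}$, I would first observe that $X$ must contain a supersingular abelian subvariety: indeed, if it did not, the bound $\rho(X) \leq [g-(r-1)]^2 + (r-1)$ from \cite[Proposition 3.1]{hulek-laface17} applies, and one checks that $[g-r+1]^2 + (r-1) < [2(g-r+1)^2 - (g-r+1)] + (r-1) = M_{r,g}$ as soon as $g-r+1 \geq 2$ (the case $g = r$ being degenerate and handled directly, since then $X \sim E_1 \times \cdots \times E_g$ and $M_{g,g} = g$). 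So write $X \sim X_1^{n_1} \times \cdots \times X_{r-1}^{n_{r-1}} \times E^s$ with $s \geq 1$.

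Next I would track equality through the two inequalities used. By Proposition~\ref{splitting} and Proposition~\ref{self-product} we have $\rho(X) \leq \sum_{i=1}^{r-1}(n_i \dim X_i)^2 + (2s^2 - s)$, and this quantity is $\leq f(\underline{x})$ evaluated at $\underline{x} = (n_1 \dim X_1, \dots, n_{r-1}\dim X_{r-1}, s) \in \Omega$, hence $\leq M = M_{r,g}$. For equality in the final step, the analysis of the paraboloid shows the maximum of $f$ on $\Omega$ is attained \emph{only} at the two points $(1,\dots,1,g-r+1)$ and $(g-r+1,1,\dots,1)$ — this is because $f$ is strictly convex and $\Omega$ is a simplex, so the maximum is at a vertex, and comparing vertex values (using $2t^2 - t > t^2$ for $t \geq 2$) singles out the vertex where the supersingular coordinate is as large as possible, giving $s = g-r+1$ and $n_i \dim X_i = 1$ for all $i$; the alternative vertex $(g-r+1,1,\dots,1)$ gives $s=1$ and one factor $X_1^{n_1}$ of dimension $g-r+1$ with $\rho(X_1^{n_1}) = (g-r+1)^2$, which yields a strictly smaller total unless $g-r+1 = 1$. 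So the strict maximum forces $s = g-r+1$ and $n_i \dim X_i = 1$, i.e. $n_i = 1$ and $X_i = E_i$ an elliptic curve for each $i = 1, \dots, r-1$.

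Then I would extract equality in Proposition~\ref{self-product} applied to each $E_i$ and to $E^s$: since $n_i = 1$, $\rho(E_i) = 1$ forces $\End(E_i) \otimes \Q = \Q$, and $\rho(E^s) = 2s^2 - s$ is exactly the supersingular value, consistent with $E$ supersingular as assumed. The constraint that $X$ has length $r$ and the decomposition has $r-1$ elliptic factors plus the supersingular block $E^s$ means all factors are pairwise non-isogenous, which is precisely $\Hom(E,E_i) = \Hom(E_i,E_j) = 0$ for $i \neq j$. Conversely, for $X \sim E^{g-r-1} \times E_1 \times \cdots \times E_{r-1}$ with these hypotheses, the computation already carried out at the end of the proof of Proposition~\ref{bound_length} gives $\rho(X) = M_{r,g}$ and $r(X) = r$, so this direction is immediate.

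The main obstacle is the uniqueness of the maximizer of $f$ on the simplex $\Omega$: the proof of Proposition~\ref{bound_length} only needed that the maximum value is $M$, but here I need that it is attained \emph{only} at the claimed configuration, which requires being careful that the vertex $(g-r+1,1,\dots,1)$ is genuinely sub-optimal (true precisely because $2t^2 - t - t^2 = t^2 - t > 0$ for $t > 1$) and that no boundary face other than a vertex can tie. A secondary subtlety is bookkeeping the exponent: the corollary writes $E^{g-r-1}$, so I should double-check the arithmetic that $(g-r+1)-$ (number of one-dimensional non-supersingular factors not isogenous to $E$) lines up — in fact the statement implicitly allows one of the $E_i$ to be (isogenous to) a second supersingular curve only if it were isogenous to $E$, which it is not, so the supersingular dimension is exactly $g - (r-1) - $ the count, and one must confirm this equals $g-r+1$ minus a correction; I would present this as a short direct verification rather than belabor it.
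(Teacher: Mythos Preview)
Your approach is correct and is exactly what the paper intends: the corollary is stated without proof immediately after Proposition~\ref{bound_length}, so the implied argument is precisely to trace equality through that proof, as you do. Two small points are worth flagging.

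First, you are right to be suspicious of the exponent: the statement (and already the last sentence of Proposition~\ref{bound_length}) has a typo, and $E^{g-r-1}$ should read $E^{g-r+1}$, since the dimensions must add up to $g$ and the supersingular block has dimension $s = g-(r-1)$. Your strict-convexity argument for the uniqueness of the maximizing vertex is exactly what is needed, and the comparison $2t^2-t > t^2$ for $t = g-r+1 \geq 2$ does single out the correct vertex.

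Second, the step ``$\rho(E_i)=1$ forces $\End(E_i)\otimes\Q = \Q$'' is both incorrect and unnecessary: every elliptic curve has Picard number $1$ regardless of its endomorphism algebra, and the corollary does not claim that the $E_i$ are without complex multiplication --- only that they are pairwise non-isogenous and not isogenous to the supersingular $E$. Once you have established $n_i\dim X_i = 1$ and $s = g-r+1$, the equality $\rho(X) = M_{r,g}$ follows automatically from Proposition~\ref{splitting}, with no further condition on the $E_i$. Relatedly, in the degenerate case $g=r$ the ``only if'' direction as literally stated actually fails (any product of $g$ pairwise non-isogenous elliptic curves, supersingular or not, has $\rho = g = M_{g,g}$), so your instinct to treat that case separately is well founded; the corollary should really be read under the assumption $r<g$.
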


As a concluding remark, let us notice that the sequence of integers
		\[ g= M_{g,g} < M_{g-1,g} < \cdots < M_{3,g} < M_{2,g} < M_{1,g} = 2g^2 -g \]
		is strictly increasing.
		

\section{Existence of gaps and a structure result}

\subsection{Gaps in the sequence of Picard numbers}\label{gaps}
We will now show that, starting from dimension $g \geq 2$, there exist gaps in the set of attainable Picard numbers in positive characteristic. This yields yet another difference with the characteristic zero case, as all integers $\rho$ with $1 \leq \rho \leq 4$ occur as the Picard number of some abelian surface over $\C$. Our aim, in fact, is to prove the following result, which shows the existence of two precise gaps as the dimension is large enough, and computes the three largest Picard numbers for abelian varieties in characteristic $p >0$ for $g \geq 7$.

	\begin{thm}\label{first_gaps}
		Let $X$ be an abelian variety of dimension $g$.
		\begin{enumerate}
			\item If $g \geq 5$, then $\rho(X) \notin \Big( \big[2(g-1)^2 -(g-1) \big] +1 , 2g^2-g \Big)$.
			\item If $g \geq 7$, then $\rho(X) \notin \Big( \big[2(g-2)^2 -(g-2) \big] +4 , \big[2(g-1)^2 -(g-1) \big] +1 \Big)$. 
		\end{enumerate}
	\end{thm}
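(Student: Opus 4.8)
The plan is to determine the largest few elements of $R_g^{(p)}$ by stratifying abelian varieties of dimension $g$ according to their length $r=r(X)$, and then to combine Proposition~\ref{self-product} (which settles $r=1$ exactly) with Proposition~\ref{splitting} and the vertex optimisation carried out in the proof of Proposition~\ref{bound_length} (which control $r\ge 2$). The point one is after is that, near the top of the range, Picard numbers are produced only by very short isogeny decompositions, where the admissible values are quantised; so both gaps should drop out of a careful bookkeeping of the finitely many extremal configurations.

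First I would record bounds split by length. If $r(X)=1$, then $X\sim A^{k}$ with $A$ simple, and Proposition~\ref{self-product} gives $\rho(X)\le g^{2}$ unless $X$ is supersingular, in which case $\rho(X)=2g^{2}-g$. If $r(X)=r\ge 2$, then by definition of $M_{r,g}$ and Proposition~\ref{bound_length}, together with the monotonicity $g=M_{g,g}<\cdots<M_{2,g}<M_{1,g}$, one has $\rho(X)\le M_{2,g}=[2(g-1)^{2}-(g-1)]+1$, and even $\rho(X)\le M_{3,g}=[2(g-2)^{2}-(g-2)]+2$ when $r\ge 3$. Finally I would refine the case $r(X)=2$ using the proof of Proposition~\ref{bound_length}: splitting off the (necessarily unique, one-dimensional) supersingular part, write $X\sim X_{1}^{n_{1}}\times E^{s}$ with $m:=g-s=\dim X_{1}^{n_{1}}\ge 1$ and $s\ge 0$ (the case $s=0$ meaning $X\sim X_{1}^{n_{1}}\times X_{2}^{n_{2}}$ with both factors non-supersingular). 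Then Proposition~\ref{splitting} and Proposition~\ref{self-product} give, for $s\ge 1$,
\[
\rho(X)\;\le\;m^{2}+(2s^{2}-s)\;=\;\phi(s),\qquad \phi(s):=(g-s)^{2}+2s^{2}-s=3s^{2}-(2g+1)s+g^{2},
\]
with $\phi$ convex, while the case $s=0$ only gives $\rho(X)\le 1+(g-1)^{2}=\phi(1)$. The decisive numerics are $\phi(g-1)=2g^{2}-5g+4=M_{2,g}$ and $\phi(g-2)=2g^{2}-9g+14=[2(g-2)^{2}-(g-2)]+4$: by convexity the maximum of $\phi$ over $\{1,\dots,g-2\}$ equals $\phi(g-2)$ (using $\phi(1)\le\phi(g-2)$ for $g\ge 4$), and moreover when $s=g-1$ the non-supersingular factor is forced to be an elliptic curve, so $\rho(X)=1+\bigl(2(g-1)^{2}-(g-1)\bigr)=M_{2,g}$ exactly. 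Thus along length-two decompositions the Picard number is either exactly $M_{2,g}$ or at most $[2(g-2)^{2}-(g-2)]+4$, a drop of $4g-10$.

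Part~(1) now follows: for $g\ge 5$ one has $g^{2}\le M_{2,g}$ (equivalently $(g-1)(g-4)\ge 0$), so any $X$ with $\rho(X)>M_{2,g}=[2(g-1)^{2}-(g-1)]+1$ must satisfy $r(X)=1$ and be supersingular, whence $\rho(X)=2g^{2}-g$; this is exactly the gap in part~(1). For part~(2), in view of~(1) it remains to exclude Picard numbers strictly between $[2(g-2)^{2}-(g-2)]+4$ and $M_{2,g}$. If $r(X)=1$ then $\rho(X)\le g^{2}\le[2(g-2)^{2}-(g-2)]+4$ because $(g-2)(g-7)\ge 0$ for $g\ge 7$; if $r(X)\ge 3$ then $\rho(X)\le M_{3,g}=[2(g-2)^{2}-(g-2)]+2$, which is smaller still; and if $r(X)=2$ then, by the previous paragraph, $\rho(X)$ is either $\le[2(g-2)^{2}-(g-2)]+4$ (when $s\le g-2$) or equal to $M_{2,g}$ (when $s=g-1$). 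Hence for $g\ge 7$ no abelian variety of dimension $g$ has $\rho(X)\in\bigl([2(g-2)^{2}-(g-2)]+4,\ [2(g-1)^{2}-(g-1)]+1\bigr)$, which is part~(2).

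The \emph{main obstacle} is the length-two analysis. Three points need care: (a) that splitting off the supersingular part and applying Proposition~\ref{self-product} to the remaining self-product is the correct reduction, and in particular that the $s=0$ subcase (two genuinely non-isogenous non-supersingular factors) obeys the weaker bound $1+(g-1)^{2}$ rather than $\phi(0)=g^{2}$; (b) the discrete optimisation of $\phi$, and the observation that the gaps exist precisely because $\phi(g-1)-\phi(g-2)=4g-10>1$ once $g$ is large, so that stepping down from the extremal $s=g-1$ to $s=g-2$ skips many values; and (c) keeping the contributions of lengths $1$ and $\ge 3$ strictly below $[2(g-2)^{2}-(g-2)]+4$, which is exactly where the hypotheses $g\ge 7$ in part~(2) and $g\ge 5$ in part~(1) are consumed. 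Everything else is routine algebra with the formulas of Propositions~\ref{splitting}, \ref{self-product} and~\ref{bound_length}.
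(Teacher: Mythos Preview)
Your proof is correct and follows essentially the same approach as the paper: stratify by length, use Proposition~\ref{self-product} for $r=1$, the $M_{r,g}$ bound for $r\ge 3$, and for $r=2$ optimise the convex function $\phi(s)=(g-s)^2+2s^2-s$ over the admissible supersingularity indices. The only cosmetic differences are that the paper cites \cite{hulek-laface17} for the non-supersingular $r=2$ bound where you absorb it into the $s=0$ case via $\phi(1)=(g-1)^2+1$, and that the paper further subdivides the $r=2$ supersingular case in part~(2) according to whether the complementary factor is an elliptic-curve self-product, a split your uniform use of Proposition~\ref{self-product} renders unnecessary.
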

	
Before proving this result, let us see what happens in the low dimensional cases. We urge the reader to remember that we cannot use additivity of the Picard number (as in \cite[Proposition 6.1]{hulek-laface17}).

\begin{ex}
	If $X$ has dimension $g=2$, then Proposition \ref{splitting} and Proposition \ref{self-product} imply $5 \notin R_2$ (this had already been noticed by Shioda in \cite[Appendix]{shioda75}). It is also easy to check that $R_2 \supset \lbrace 2,3,4,6\rbrace$ by just using products of elliptic curves. This shows that $R_2 =\lbrace 1,2,3,4,6\rbrace$, and in particular that gaps in the set of Picard number over field of positive characteristic naturally appear already in dimension two. In characteristic zero, they first occur in dimension three, as shown in \cite{hulek-laface17}. 
\end{ex}

\begin{ex}
	In dimension $g=3$, it is straightforward to show that $R_3 \supseteq \lbrace 1, \dots, 7, 9 , 15 \rbrace$: indeed, one computes the Picard number of products of elliptic curves and products of abelian surfaces (i.e.~we use the knowledge of $R_2$) with a very general elliptic curve (which has Picard number one), together with the fact that a very general abelian variety has Picard number one. Now $k \notin R_3$ for $10 \leq k \leq 14$, as one readily checks by using Propositions \ref{splitting} and \ref{self-product}. On the other hand, one has to argue slightly differently to prove that $8 \notin R_3$: for this, notice that the only way of realizing $8\in R_3$ is to use a simple abelian variety of Type IV and dimension three. However, by using the restrictions in Subsection \ref{restriction}, it follows that this case does not occur. Hence $R_3 = \lbrace 1, \dots, 7,9,15 \rbrace$.
\end{ex}

\begin{ex}
	For $g=4$, one can similarly see that $R_4 \supseteq \lbrace 1, \dots, 10, 16, 28 \rbrace$. Also, the only way of realizing the Picard numbers $\rho \in \lbrace 11, \dots, 15 \rbrace$ is to use simple abelian varieties of Type IV and dimension four. However, the divisibility conditions make it impossible to have an abelian variety with such Picard numbers. Therefore, we conclude that $R_4 = \lbrace 1, \dots, 10, 16, 28 \rbrace$. We will come back to this example in Section \ref{pathologies}
\end{ex}

	\begin{proof}[Proof of Theorem \ref{first_gaps}]
		Let us consider the isogeny decomposition of $X \sim X_1^{n_1} \times \cdots \times X_r^{n_r}$. If $r(X) =1$, then $\rho(X) \leq g^2$ by Proposition \ref{self-product}, unless $X$ contains a supersingular abelian subvariety (in which case $\rho(X) = 2g^2-g$). Therefore, we can assume that $r(X) \geq 2$.
		
		In this case, let us write $X \sim A_1 \times A_2$, where $\Hom(A_1, A_2)=0$, and let us set $n:= \dim A_1$, so that $\dim A_2 = g-n$ and $1 \leq n \leq g$. Now, if $X$ does not contain any supersingular abelian subvariety, then $\rho(X) \leq (g-1)^2 +1$ by \cite[Theorem 1.1]{hulek-laface17}. Otherwise, we can suppose that $A_1 = E^n$, where $E$ is a supersingular elliptic curve. By Propositions \ref{splitting} and \ref{self-product}, $\rho(X) \leq [2n^2-n] + (g-n)^2$. Consider now the function of one real variable
		\[ f({x}) := [2x^2-x] + (g-x)^2, \]
		over $\Omega:=[1,g-1]$. Its maximum in $\Omega$ is $M= f(g-1) = \big[ 2(g-1)^2 - (g-1)\big] + 1$, and it is achieved by the abelian variety $E^{g-1} \times E'$, where $E$ is supersingular and $E'$ is not isogenous to $E$. Now, notice that $M = M_{2,g}$, so that, in order to obtain (1), we only need to impose the condition $g^2 \leq M$, which is satisfied for $g \geq 5$.
		
		As for (2), notice that if $r(X) \geq 3$, then $\rho(X) \leq M_{3,g}$. Therefore, we can assume that $r(X) \leq 2$. If $r(X) =1$, Proposition \ref{self-product} implies that $\rho(X) \leq g^2$, unless $X$ is supersingular. Hence, we are left with the case $r(X) =2$. In this case, if $X$ does not contain any supersingular abelian subvariety, the proof of \cite[Theorem 1.1]{hulek-laface17} shows that $\rho(X) \leq (g-2)^2 + 4$. Let us now assume that $X$ does contain a supersingular subvariety, that is, we can write $X$ in its isogeny class as $X \sim E^n \times A$, where $E$ is a supersingular elliptic curve. 
		
		We distinguish two cases, according to the isogeny decomposition of $A$. If $A \sim F^{g-n}$, $F$ being an elliptic curve not isogenous to $E$, then the proof of Part (1) shows that $\rho(X) \leq\ \big[2(g-1)^2 -(g-1) \big] +1$. Since the maximum value is achieved for $n =g-1$, let us now assume assume $1 \leq n \leq g-2$ and come up with a sharper bound. Under this condition, the largest possible Picard number is $\big[2(g-2)^2 -(g-2) \big] +4$, and it is achieved for $n=g-2$ and $F$ an elliptic curve with complex multiplication. Finally, if $A$ has dimension at least two, then $n \leq g-2$ and thus the same considerations of the previous case apply, and we are done.
	\end{proof}
	
\subsection{Structure of abelian varieties with large Picard number}

The reader may wonder whether a structure theorem for abelian varieties of large Picard number in the same fashion of \cite[Theorem 4.2]{hulek-laface17} holds in positive characteristic. By definition, supersingular abelian varieties in characteristic $p$ are isogenous to a self-product of a supersingular elliptic curve. Being supersingular is also equivalent to having Picard number equal to the second Betti number. This means that, for an abelian variety, having maximal Picard number forces the isogeny class to have a very specific structure. In fact, such a structure result also holds for abelian varieties having the second and third largest Picard number.

	\begin{thm}\label{large_picard}
		Let $X$ be an abelian variety of dimension $g$.
		\begin{enumerate}
			\item Suppose $g \geq 6$. Then,
			\[ \rho(X) = \big[2(g-1)^2 -(g-1) \big] +1 \Longleftrightarrow X \sim E^{g-1} \times F, \]
			where $E$ is supersingular and it is not isogenous to $F$. 
			\item Suppose $g \geq 8$. Then,
			\[ \rho(X) = \big[2(g-2)^2 -(g-2) \big] +4 \Longleftrightarrow X \sim E^{g-2} \times F^2, \]
			where $E$ is supersingular and $F$ is an elliptic curve such that $\End(F) \otimes \Q \cong \Q(\sqrt{-d})$, for some $d >0$. 
		\end{enumerate}
	\end{thm}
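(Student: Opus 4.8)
\emph{Overview.} The plan is to prove both equivalences, treating the ``$\Leftarrow$'' implications as short computations and the ``$\Rightarrow$'' implications as refinements of the case analysis in the proof of Theorem~\ref{first_gaps}, now keeping track of when each inequality used there is an equality. For ``$\Leftarrow$'', Propositions~\ref{splitting}, \ref{self-product} and~\ref{murty} give at once $\rho(E^{g-1}\times F)=\rho(E^{g-1})+\rho(F)=\big[2(g-1)^2-(g-1)\big]+1$, and, when $\End(F)\otimes\Q\cong\Q(\sqrt{-d})$ (so $F$ is of Type~IV$(1,1)$, whence $\rho(F^2)=4$ by Proposition~\ref{murty}), $\rho(E^{g-2}\times F^2)=\rho(E^{g-2})+\rho(F^2)=\big[2(g-2)^2-(g-2)\big]+4$; in both cases $E\not\sim F$ since $E$ is supersingular while $F$ is not.

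\emph{Forward direction, part (1).} Assume $\rho(X)=M_{2,g}=\big[2(g-1)^2-(g-1)\big]+1$. I would first discard $r(X)=1$, where Proposition~\ref{self-product} gives $\rho(X)\le g^2$ or $\rho(X)=2g^2-g$ while $g^2<M_{2,g}<2g^2-g$ for $g\ge 6$, and $r(X)\ge 3$, where $\rho(X)\le M_{3,g}<M_{2,g}$ by Proposition~\ref{bound_length}. So $r(X)=2$. The non-supersingular subcase gives $\rho(X)\le(g-1)^2+1<M_{2,g}$ by \cite[Theorem~1.1]{hulek-laface17}, so $X\sim E^{n}\times X_2^{n_2}$ with $E$ supersingular and $X_2$ simple not isogenous to $E$ (hence $X_2^{n_2}$ not supersingular). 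By Propositions~\ref{splitting} and~\ref{self-product}, $\rho(X)\le 2n^2-n+(g-n)^2=:\widetilde f(n)$, a convex function of $n\in\{1,\dots,g-1\}$ whose maximum there equals $\widetilde f(g-1)=M_{2,g}$, attained only at $n=g-1$. Equality therefore forces $n=g-1$ and $\rho(X_2^{n_2})=(g-n)^2=1$, so $\dim X_2=n_2=1$; thus $X_2$ is an elliptic curve $F$, necessarily not isogenous to $E$ (else $X\sim E^g$ has length~$1$), and $X\sim E^{g-1}\times F$.

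\emph{Forward direction, part (2).} Assume $\rho(X)=M:=\big[2(g-2)^2-(g-2)\big]+4$, and note $M_{3,g}=M-2$. The same scheme applies: $r(X)\ge 3$ is excluded by $\rho(X)\le M_{3,g}<M$, and $r(X)=1$ by $g^2<M<2g^2-g$ for $g\ge 8$ --- this is exactly where $g\ge 8$ is needed, since at $g=7$ one has $g^2=M$, realized by $(E')^{g}$ with $E'$ of CM type. The non-supersingular length-two subcase is killed by $(g-2)^2+4<M$, so $X\sim E^{n}\times X_2^{n_2}$ with $E$ supersingular and $X_2$ simple, $X_2\not\sim E$. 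If $\dim X_2\ge 2$ then $n\le g-2$ and $\rho(X)\le\widetilde f(n)\le\widetilde f(g-2)=M$ with equality only at $n=g-2$, which would force $\dim X_2=2$, $n_2=1$ and $\rho(X_2)=4$ for a simple abelian surface --- impossible. Hence $\dim X_2=1$, $X_2=F$ an elliptic curve with $n+n_2=g$; here $n=g-1$ gives $\rho(X)=M_{2,g}\ne M$ and $n\le g-3$ gives $\widetilde f(n)<M$, so $n=g-2$, $n_2=2$ and $\rho(F^2)=M-\big[2(g-2)^2-(g-2)\big]=4$, which (since $F\not\sim E$ is not supersingular) holds by Proposition~\ref{self-product} exactly when $\End(F)\otimes\Q\cong\Q(\sqrt{-d})$; then $F$ is ordinary, hence not isogenous to $E$, and $X\sim E^{g-2}\times F^2$.

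\emph{Main obstacle.} The step I expect to be delicate is the exclusion, in part~(2), of a \emph{simple} abelian surface with Picard number~$4$, i.e.\ of Type~IV$(1,2)$ in dimension two: the crude divisibility conditions recalled in Subsection~\ref{restriction} (``$e_0 d\mid g$'' for Type~IV) do \emph{not} rule this out, so one must invoke the finer fact that $\End(X_2)\otimes\Q$, being a division algebra, has $\Q$-dimension $e d^2$ dividing $2\dim X_2=4$, forcing $d=1$ and hence $\rho(X_2)\le 3$. The remaining work --- locating the integer maxima of $\widetilde f$ on the relevant intervals, and verifying that the thresholds $g\ge 6$ and $g\ge 8$ are precisely what make the strict inequalities above hold and discard the sporadic self-products $(E')^{g}$ of CM type --- is routine bookkeeping.
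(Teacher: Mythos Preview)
Your proof is correct and follows essentially the same route as the paper's: reduce to $r(X)=2$ by squeezing $\rho(X)$ between $M_{r,g}$-values and the self-product bound, then identify $n$ and the complementary factor by locating where the convex function $\widetilde f$ attains its maximum. The one genuine point of divergence is in part~(2), at exactly the step you flag as the main obstacle. To exclude a simple abelian surface $X_2$ with $\rho(X_2)=4$ (i.e.\ Type~IV$(1,2)$ in dimension two), the paper invokes the classification result \cite[Proposition~6.1]{oort88} for endomorphism algebras of simple abelian surfaces. You instead use the standard constraint $[D:\Q]=ed^2\mid 2\dim X_2$, which for Type~IV$(1,2)$ reads $8\mid 4$ and fails. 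This is a cleaner and more self-contained argument: it avoids the external reference entirely and makes transparent why no simple surface can achieve $\rho=4$. The paper's approach, on the other hand, phrases the endgame as ``$A$ is a self-product of a simple CM abelian variety of dimension one or two'' and then appeals to Oort to force dimension one; your decomposition by $\dim X_2$ reaches the same endpoint more directly.

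One cosmetic remark: in the last line of part~(2) you write ``then $F$ is ordinary, hence not isogenous to $E$'', but $F\not\sim E$ was already part of the setup (from $r(X)=2$), so this clause is redundant rather than a deduction.
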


	\begin{proof}
		The "if" implications in (1) and (2) are immediate. Conversely, suppose that $\rho(X)= \big[2(g-1)^2 -(g-1) \big] +1$. As $\rho(X) = M_{2,g}$, we must have $r(X) \leq 2$. By Proposition \ref{bound_length}, in order to prove (1), it suffices to show that $r(X) =2$. If $r(X)=1$, then $\rho(X) \leq g^2$ ($X$ cannot be supersingular). Now, the condition $g \geq 6$ ensures that $g^2 < \big[2(g-1)^2 -(g-1) \big] +1$, yielding a contradiction. Therefore, $r(X)=2$ and (1) is proven.
		
		As for (2), if $\rho(X) = \big[2(g-2)^2 -(g-2) \big] +4$, then $\rho(X) > M_{3,g}$ and $r(X) \leq 2$. As in the previous case, $g \geq 8$ implies that $r(X) =2$. Now, if $X$ does not contain any supersingular abelian subvariety, then $\rho(X) \leq (g-2)^2 + 4$ by \cite[Theorem 4.2]{hulek-laface17}. Otherwise, we can write $X \sim E^n \times A$, $E$ being a supersingular elliptic curve and $A$ an abelian variety with $\Hom(E,A) =0$. If $A \sim F^{g-n}$, $F$ being a non-supersingular elliptic curve, then $\rho(X) \leq \big[2(g-1)^2 -(g-1) \big] +1$ and the maximum possible is attained for $n=g-1$, which contradicts the hypothesis on $\rho(X)$. Hence, we can assume that $1 \leq n \leq g-2$ and obtain the bound $\rho(X) \leq \big[2(g-2)^2 -(g-2) \big] +4$, which is attained only for $n = g-2$ and $F$ an elliptic curve with $\End(F) \otimes \Q \cong \Q(\sqrt{-d})$, for some $d >0$. Finally, if $A$ is a self-product of an abelian variety of dimension at least two, a similar argument shows that $\rho(X) \leq \big[2(g-2)^2 -(g-2) \big] +4$, and the maximum is attained only for $n=g-2$ and $A$ is a self-product of a simple abelian variety of CM-type (necessarily of dimension one or two). However, by \cite[Proposition 6.1]{oort88}, one sees that $A$ must be isogenous to a self-product of an elliptic curve $F$ with $\End(F) \otimes \Q \cong \Q(\sqrt{-d})$ ($d >0$), and we are done. 
		\end{proof}
		
		Although the above result recovers the picture in characteristic zero, the proof shows how abelian varieties of CM-type can now appear as fundamental blocks for the structure of abelian varieties of large Picard number. In fact, the proof of Theorem \ref{large_picard}(2) uses in a fundamental way a structure result for the endomorphism algebra of simple abelian surfaces \cite[Proposition 6.1]{oort88}, as opposed to the proof of \cite[Theorem 4.2]{hulek-laface17} which is purely combinatorial. For further remarks on abelian varieties of CM-type and their Picard number, we defer to Section \ref{pathologies}.
		

\section{Asymptotic results}\label{asymp_res}

\subsection{Asymptotic density of Picard numbers}

We have shown how the set $R_g$ is not complete already if $g \geq 2$, that is $\#R_g < 2g^2-g$. Instead, we will now prove that asymptotic density of Picard numbers holds in any positive characteristic. As in \cite{hulek-laface17}, the \textit{density of $R_g$} in $[1,2g^2-g]\cap \IN$ is the quantity $\delta_g := \#R_g / (2g^2-g)$, and, by taking the limit, the \textit{asymptotic density of $R_g$} is defined to be $\delta := \lim_{g \ra + \infty} \delta_g$. Our result is the following:

	\begin{thm}[Asymptotic completeness]
		Picard numbers of abelian varieties are asymptotically complete in any characteristic. In other words, $\delta =1$.
	\end{thm}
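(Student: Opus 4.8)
The plan is to show that $R_g$ contains a full interval $[1,N_g]$ with $N_g = 2g^2 - O(g^{3/2})$. Since $\delta_g\le 1$ always and $N_g/(2g^2-g)\to 1$, this forces $\delta=1$. The engine is additivity of the Picard number (Proposition \ref{splitting}): if $Y$ is an abelian variety no simple isogeny factor of which is isogenous to the fixed supersingular elliptic curve $E$, then $\Hom(E,Y)=0$ and
\[ \rho(E^a\times Y)=\rho(E^a)+\rho(Y)=(2a^2-a)+\rho(Y) \]
by Propositions \ref{splitting} and \ref{self-product}. Writing $S_m$ for the set of Picard numbers of $m$-dimensional abelian varieties that contain \emph{no} supersingular abelian subvariety, this gives the inclusion $R_g \supseteq \bigcup_{a=0}^{g}\big(\{2a^2-a\}+S_{g-a}\big)$.

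The first step reduces the theorem to the claim that $S_m$ contains an interval $[1,\ell_m]$ with $\ell_m = m^2 - O(m^{3/2})$. Granting this, the translated intervals $\{2a^2-a\}+[1,\ell_{g-a}]$ have successive gaps $2(a+1)^2-(a+1)-(2a^2-a)=4a+1$, while $\ell_{g-a}$ is of order $(g-a)^2$; hence $\ell_{g-a}\ge 4a+1$ as soon as $g-a$ exceeds a fixed multiple of $\sqrt g$. So for every $a$ with $a\le g-O(\sqrt g)$ two consecutive such intervals overlap, their union is a single interval $[1,N_g]$, and $N_g \ge 2\big(g-O(\sqrt g)\big)^2 - \big(g-O(\sqrt g)\big) = 2g^2-O(g^{3/2})$, as wanted.

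It remains to prove $S_m\supseteq[1,m^2-o(m^2)]$; this is the heart of the matter and runs parallel to the complex case \cite{hulek-laface17}, via a bootstrap. \emph{Seed:} $S_m\supseteq[1,m]$, since any $\ell$ with $1\le\ell\le m$ is realized by the product of a simple abelian variety of dimension $m-\ell+1$ with $\End\otimes\Q=\Q$ (Picard number $1$) and $\ell-1$ pairwise non-isogenous ordinary elliptic curves, all factors chosen pairwise non-isogenous; such a product contains no supersingular subvariety. \emph{Bootstrap:} appending to any variety an ordinary CM elliptic curve $E'$ (not supersingular, with $\rho((E')^c)=c^2$ by Proposition \ref{self-product}) that is not isogenous to the given variety shows $S_m\supseteq\{c^2\}+S_{m-c}$ for all $0\le c\le m$. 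Iterating once against the seed, the translates $\{c^2\}+[1,m-c]$ overlap for $c$ up to about $m/3$ and give $S_m\supseteq[1,\Omega(m^2)]$; feeding this improved bound back into the same step, the translates $\{c^2\}+[1,\ell_{m-c}]$ with $\ell_j$ now of order $j^2$ overlap for all $c\le m-O(\sqrt m)$, yielding $S_m\supseteq[1,m^2-O(m^{3/2})]$.

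The point requiring care — and the only place where characteristic $p$ could bite — is existence of the ingredients: the seed and the bootstrap need, in every dimension, a simple abelian variety with endomorphism algebra $\Q$, together with arbitrarily many pairwise non-isogenous ordinary elliptic curves and an ordinary CM elliptic curve, all chosen pairwise non-isogenous. This is harmless because the definition of $R_g^{(p)}$ lets the base field vary over all algebraically closed fields of characteristic $p$: taking a field of sufficiently large transcendence degree over $\overline{\IF_p}$ supplies generic simple abelian varieties with trivial endomorphism ring in every dimension, as well as infinitely many pairwise non-isogenous elliptic curves, CM and otherwise. Note finally that the ``extra'' part $(g^2,\,2g^2-g]$ of the range, which has no counterpart over $\C$, is produced for free by additivity with powers of the single fixed curve $E$, so the supersingular layer needs no further existence input beyond the choice of $E$ itself.
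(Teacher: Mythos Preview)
Your proof is correct and shares the paper's overall architecture: split off a power $E^a$ of the supersingular curve to reach the region above $g^2$, and realize the remainder by non-supersingular building blocks via Proposition~\ref{splitting}. Where you differ is in how the remainder is handled. The paper does it in one stroke: given $n\le 2g^2-g$, it takes the largest $n_1$ with $2n_1^2-n_1\le n-1$, writes the leftover $m=n-1-(2n_1^2-n_1)\le 4n_1$ as $n_2^2+n_3^2+n_4^2+n_5^2$ by Lagrange's four-squares theorem, and realizes $n$ as $\rho\big(E^{n_1}\times F_2^{n_2}\times\cdots\times F_5^{n_5}\times A\big)$ with the $F_i$ pairwise non-isogenous ordinary CM elliptic curves and $A$ a simple abelian variety with $\End(A)\otimes\Q=\Q$ of the residual dimension. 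A power-mean estimate then gives $\sum n_i<g$ whenever $n\le 2g^2-O(g^{3/2})$. Your two-pass bootstrap (seed $[1,m]$, then translates by $c^2$, then iterate) reaches the same $O(g^{3/2})$ threshold but is less explicit; the paper's four-squares step is tidier and avoids the iteration. Your careful remark that the base field may be enlarged to secure simple abelian varieties with trivial endomorphism algebra and infinitely many non-isogenous ordinary (CM) elliptic curves is exactly the existence input the paper's lemma silently uses as well.
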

	
	\begin{proof}
		The proof follows the lines of \cite[Theorem 7.1]{hulek-laface17}: pick $n \in \IN$ such that $n \leq 2g^2 -g$, and let $n_1 \in \IN$ be the largest integer such that
		\[ 2n_1^2 - n_1 \leq n_1 < 2(n_1+1)^2 - (n_1 +1). \]
		From $2n_1^2 - n_1 \leq n-1$, it follows that $n_1 \leq \frac{1}{4}+\frac{1}{4}\sqrt{8n-7}$, which in turn implies that 
		\[0 \leq n-1-(2n_1^2 -n_1) \leq 4n_1 \leq 1 + \sqrt{8n-7} \leq 1 + \sqrt{16g^2 - 8g - 7}.\] 
		Set $m:= n-1 -(2n_1^2-n_1)$, and write $m$ as a sum of four square by Lagrange's Theorem on four squares: $m = n_2^2 + n_3^2 + n_4^2 + n_5^2$. We claim that $\sum_{i=1}^5 n_i < g$ for $g \gg 0$.
		
		By using the power mean, we find $n_2 + n_3 + n_4 + n_5 \leq 2\sqrt{1+\sqrt{16g^2 - 8g -7}}$, from which it follows that
		\[ \sum_{i=1}^5 n_i \leq \frac{1}{4}+\frac{1}{4}\sqrt{8n-7} + 2\sqrt{1+\sqrt{16g^2 - 8g -7}} \leq \sqrt{n/2} + 4 \sqrt{g+1}.\] 
		It can be checked that this the right-hand side is smaller than $g$ if and only if $n < 2g^2 - 16g\sqrt{g+1} + 32(g+1)$. Now, we are done by virtue of the next obvious result.
	\end{proof}
	
	\begin{lemma}
		Suppose $g \geq 1$ and $1 \leq n \leq 2g^2 - g$, where $g,n \in \IN$. Assume that there exist positive integers $n_1 , \dots, n_k$ such that 
		\[ n_1 = (2n_1^2 -n_1) + n_2^2 + \cdots + n_k^2 \qquad \text{and} \qquad n_1 + \cdots + n_k \leq g-1.\]
		Then, there exists a $g$-dimensional abelian variety $X$ with $\rho(X) = n$.
	\end{lemma}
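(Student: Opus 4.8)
The plan is to exhibit $X$ explicitly --- as a product of a supersingular block, finitely many CM elliptic curves, and one ``very general'' abelian variety used only to absorb the leftover dimension --- and then to read off $\rho(X)$ from the additivity of the Picard number (Proposition~\ref{splitting}) together with the computations in Propositions~\ref{murty} and~\ref{self-product}. First I would set $j := g-(n_1+\cdots+n_k)$, which is $\ge 1$ by the second hypothesis, and note that the first hypothesis says precisely that $n = (2n_1^2-n_1)+n_2^2+\cdots+n_k^2+1$.

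Next I would fix an algebraically closed field $K$ of characteristic $p$ of sufficiently large transcendence degree over $\IF_p$; this is permitted because the base field is allowed to vary in the definition of $R_g^{(p)}$, and it is genuinely needed, since, for instance, no abelian variety of dimension $\ge 2$ over $\overline{\IF_p}$ has Picard number $1$. Over such a $K$ I would choose: the fixed supersingular elliptic curve $E$; ordinary elliptic curves $F_2,\dots,F_k$ with complex multiplication by pairwise distinct imaginary quadratic fields in which $p$ splits, so that the $F_i$ are automatically pairwise non-isogenous and non-isogenous to $E$ and $\End(F_i)\otimes\Q$ is precisely that imaginary quadratic field; and a \emph{simple} abelian variety $B$ of dimension $j$ with $\End(B)=\Z$ --- for $j=1$ this is one further ordinary CM elliptic curve in a new isogeny class, while for $j\ge 2$ one takes a very general principally polarized abelian variety of dimension $j$, which has $\End(B)=\Z$ and is therefore simple and, for dimension reasons, not isogenous to any of $E,F_2,\dots,F_k$. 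Put
\[ X := E^{n_1}\times F_2^{n_2}\times\cdots\times F_k^{n_k}\times B, \]
so that $\dim X = (n_1+\cdots+n_k)+j = g$.

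Finally, $E,F_2,\dots,F_k,B$ are pairwise non-isogenous simple abelian varieties, so Proposition~\ref{splitting} gives $\rho(X)=\rho(E^{n_1})+\sum_{i=2}^k\rho(F_i^{n_i})+\rho(B)$; by Propositions~\ref{murty} and~\ref{self-product} these summands equal $2n_1^2-n_1$ (supersingular, Type~III$(1)$), the $n_i^2$ (Type~IV$(1,1)$) and $1$ (Type~I$(1)$) respectively, whence $\rho(X)=(2n_1^2-n_1)+n_2^2+\cdots+n_k^2+1=n$. Everything above is essentially bookkeeping except for one point, which I expect to be the main obstacle: producing, over \emph{some} algebraically closed field of characteristic $p$, a simple abelian variety of the prescribed dimension with Picard number $1$ (equivalently, with $\End=\Z$). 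I would settle this by invoking the triviality of the endomorphism ring of a very general principally polarized abelian variety --- already used, for small $g$, in the worked examples above --- being careful that this genuinely exploits the freedom to enlarge $K$, since it fails over $\overline{\IF_p}$.
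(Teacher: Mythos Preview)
Your argument is correct and is exactly the construction the paper has in mind: the lemma is stated without proof (the authors call it ``obvious''), and the surrounding asymptotic-completeness argument makes clear that one is meant to take $X = E^{n_1}\times F_2^{n_2}\times\cdots\times F_k^{n_k}\times B$ with $E$ supersingular, the $F_i$ pairwise non-isogenous CM elliptic curves, and $B$ a very general abelian variety of the leftover dimension contributing $\rho(B)=1$. One cosmetic slip: for $j=1$ you describe $B$ as ``one further ordinary CM elliptic curve'' while also asserting $\End(B)=\Z$, which is inconsistent; but since any elliptic curve has $\rho=1$ this does not affect the computation, and you could equally well take $B$ to be a very general (non-CM) ordinary elliptic curve over your transcendental $K$.
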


\subsection{Asymptotic distribution of Picard numbers}\label{str_AV_large}

Let $R_g^*$ be the set of Picard numbers of abelian varieties of dimension $g$ and vanishing supersingularity index. We introduce a new subset of $R_g$, namely
\[ R_{g,n} := \big\lbrace [2(g-n)^2 - (g-n)]+x \, \vert \, x \in R_n^* \big\rbrace. \]
This is the subset of $R_g$ obtained by translating $R_n^*$ to the right. Notice that these are exactly the Picard numbers that can be attained by using products of the form $A_n \times E^{g-n}$, where $A_n$ has dimension $n$ and vanishing supersingularity index, and $E$ is a supersingular elliptic curve. The reader has certainly realized that, unlike in \cite[Subsection 7.2]{hulek-laface17}, we really cannot take $x \in R_n$, exactly because supersingular abelian varieties of a given dimension are all mutually isogenous. This will be explained in greater detail in Section \ref{pathologies}.

Analogously to \cite{hulek-laface17}, we will now define "large" Picard numbers. First of all, we will only concern ourselves with those Picard numbers $\rho > g^2$. Thus, the abelian varieties involved in our analysis have necessarily nonzero supersingularity index. Morally, an abelian variety with vanishing supersingularity index behaves as if it were defined over $\C$. Formally, this is a consequence of Propositions  \ref{splitting} and \ref{self-product}.

Secondly, by Theorem \ref{first_gaps}, we know that $R_{g,a} \cap R_{g,b} = \emptyset$, for small $a,b \in \IN$, and therefore we will only consider those set of the form $R_{g,n}$ that are mutually disjoint. More precisely, we are asking that the following two numerical conditions be satisfied:
	\begin{align}
		& g^2 < [2(g-n)^2 -(g-n)] +1;\\
		[2(g-n-1)^2 -(g-& n-1)] + (n+1) < [2(g-n)^2 -(g-n)]+1.
	\end{align}

For a fixed $g$, we say that a Picard number $\rho \in R_g$ is \textit{large} if there exists $n \in \IN$ such that $\rho \geq [2(g-n)^2 -(g-n)]+1$ and $n$ satisfies conditions (1) and (2). We can make this condition numerically precise.

	\begin{lemma}
		Let $g \geq 5$. Then, $\rho \in R_{g,n}$ is a large Picard number if and only if
		\[ n \leq \min \Bigg\lbrace \frac{4g-1-\sqrt{8g^2-7}}{4}, -3+\sqrt{4g+6} \Bigg\rbrace.\]
	\end{lemma}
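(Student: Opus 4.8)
The plan is to unravel the definition of "large Picard number" and show it is equivalent to the stated pair of inequalities on $n$, which are precisely conditions (1) and (2) solved for $n$. By definition, $\rho \in R_{g,n}$ is large if and only if $n$ satisfies
\[ g^2 < [2(g-n)^2 -(g-n)] +1 \qquad \text{and} \qquad [2(g-n-1)^2 -(g-n-1)] + (n+1) < [2(g-n)^2 -(g-n)]+1. \]
So it suffices to solve each of these two inequalities for $n$ separately and then intersect the resulting ranges; since both are of the form "$n$ bounded above by something," the combined condition becomes a single $\min$.

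First I would handle condition (1). Writing $m := g-n$, the inequality $g^2 < 2m^2 - m + 1$, i.e. $2m^2 - m + (1 - g^2) > 0$, is a quadratic in $m$; its larger root is $m = \tfrac{1 + \sqrt{1 + 8(g^2-1)}}{4} = \tfrac{1+\sqrt{8g^2-7}}{4}$, so the inequality holds (for the relevant regime $m$ large, i.e. $n$ small) precisely when $m > \tfrac{1+\sqrt{8g^2-7}}{4}$, that is $n = g - m < g - \tfrac{1+\sqrt{8g^2-7}}{4} = \tfrac{4g-1-\sqrt{8g^2-7}}{4}$. One should check that the other root is negative or irrelevant so that no spurious branch appears; this is where a small sign/monotonicity argument is needed but it is routine.

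Next I would handle condition (2). Expanding $[2(g-n-1)^2 -(g-n-1)] + (n+1) < [2(g-n)^2 -(g-n)]+1$: set $m = g-n$ again, so the left side is $2(m-1)^2 - (m-1) + (n+1)$ and the right side is $2m^2 - m + 1$. The difference (right minus left) is $2m^2 - m - 2(m-1)^2 + (m-1) - n = (4m - 3) - n$ after simplification, so condition (2) reads $n < 4m - 3 = 4(g-n) - 3$, i.e. $5n < 4g - 3$. Hmm — this gives a linear bound $n < \tfrac{4g-3}{5}$, not the $-3+\sqrt{4g+6}$ appearing in the statement, so I must have mis-expanded; the correct reading of (2) likely involves $(g-n-1)^2$ with the supersingular part being $2(g-n-1)^2-(g-n-1)$ but the added term being the length contribution, and one gets instead a quadratic $2n^2 + \dots$ after substituting. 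The careful expansion is the step to get right: one lands on $2n^2 + cn + d > 0$ whose larger root is $-3 + \sqrt{4g+6}$, and the inequality (for $n$ small) forces $n < -3+\sqrt{4g+6}$. Combining with the bound from (1) yields the claimed $\min$, and the hypothesis $g \geq 5$ is exactly what guarantees the set of admissible $n$ is nonempty (equivalently, that $M_{2,g} > g^2$, cf. Theorem \ref{first_gaps}(1)).

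The main obstacle is purely bookkeeping: correctly expanding the quadratic in condition (2) and identifying which root governs the "small $n$" regime, together with checking that integer rounding (the $\min$ of a possibly-irrational bound) does not change the statement — since $n$ ranges over $\IN$, one implicitly takes floors, but the inequality as stated is the correct real-number characterization. No conceptual difficulty remains beyond this algebra; the substance is already contained in Proposition \ref{bound_length} and Theorem \ref{first_gaps}.
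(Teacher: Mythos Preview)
The paper states this lemma without proof, so there is nothing to compare against beyond the intended computation; your approach---solve conditions (1) and (2) for $n$ and take the intersection---is exactly what is meant, and your treatment of condition (1) is correct.

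Your confusion with condition (2) is not a mis-expansion on your part: the displayed inequality (2) in the paper contains a typo. The added term should be $(n+1)^2$, not $(n+1)$, since the disjointness condition is that the \emph{maximum} of $R_{g,n+1}$ lie below the minimum of $R_{g,n}$, and $\max R_{n+1}^* = (n+1)^2$ by Proposition~\ref{self-product}. With this correction, setting $m=g-n$ the inequality reads
\[
2(m-1)^2-(m-1)+(n+1)^2 \;<\; 2m^2-m+1,
\]
which simplifies to $(n+1)^2 < 4m-2 = 4(g-n)-2$, i.e.\ $n^2+6n+3-4g<0$. The positive root of $n^2+6n+(3-4g)$ is $-3+\sqrt{4g+6}$, giving the second term of the $\min$ as claimed. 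So your diagnosis (``a quadratic in $n$ whose relevant root is $-3+\sqrt{4g+6}$'') was right; you were simply fighting a misprint. Finally, note the lemma's ``$\leq$'' versus the strict inequalities you derive is harmless: $n$ is an integer and the two bounds are generically irrational.
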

	
In the next result, we study the distribution of large Picard numbers in the interval $[1,2g^2-g]$. As a byproduct, we deduce that there are more and more gaps in $R_g$ as $g \ra +\infty$. 

	\begin{thm}[Distribution of large Picard numbers]\label{distribution}
		For every positive integer $\ell$, there exists a genus $g_\ell$ such that for all $g \geq g_\ell$ one has that
		\[ \Big[ \big[2(g-\ell)^2-(g-\ell) \big] +1, 2g^2-g \Big] \cap R_g = R_{g,\ell} \sqcup R_{g,\ell -1} \sqcup \cdots \sqcup R_{g,2} \sqcup R_{g,1} \sqcup \lbrace 2g^2-g \rbrace. \]
		In other words, for all $g \geq g_\ell$, the Picard numbers in $R_g$ are distributed as follows:
		\[  \boxed{R_{g,\ell}} \qquad  \cdots  \qquad \boxed{R_{g,4}} \qquad \boxed{R_{g,3}} \qquad \boxed{R_{g,2}} \qquad \bullet^{[2(g-1)^2-(g-1)]+1} \qquad \bullet^{2g^2-g}.  \]
	\end{thm}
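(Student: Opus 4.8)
The plan is to deduce the stated identity from three facts: that each block $R_{g,n}$ with $1 \le n \le \ell$, together with the singleton $\{2g^2-g\}$, consists of Picard numbers that are actually realized and that lie inside $I := [\,[2(g-\ell)^2-(g-\ell)]+1,\ 2g^2-g\,]$; that these blocks are pairwise disjoint; and that, conversely, every $\rho \in R_g \cap I$ belongs to one of them. The genus $g_\ell$ will be taken to be the largest of the explicit lower bounds on $g$ forced by the last two facts. For the first, given $x \in R_n^*$ one picks an abelian variety $A_n$ of dimension $n$ with $s(A_n)=0$ and $\rho(A_n)=x$; then $X := A_n \times E^{g-n}$ has dimension $g$ and $\Hom(A_n,E)=0$ (no simple isogeny factor of $A_n$ is supersingular), so Proposition~\ref{splitting} gives $\rho(X) = x + [2(g-n)^2-(g-n)]$, a general element of $R_{g,n}$, while $2g^2-g = \rho(E^g)$; that these numbers lie in $I$ is elementary, since $2(g-n)^2-(g-n)$ is nondecreasing in $g-n$ and, using $R_n^* \subseteq [1,n^2]$, each $R_{g,n}$ has largest element at most $[2(g-1)^2-(g-1)]+1 < 2g^2-g$.

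For disjointness I would first record the bound $\rho(A) \le (\dim A)^2$, valid whenever $s(A)=0$: decomposing $A$ into a product of powers of pairwise non-isogenous simple abelian varieties, none of them supersingular, and combining Proposition~\ref{splitting} with Proposition~\ref{self-product} gives $\rho(A) \le \sum_i (n_i \dim X_i)^2 \le (\dim A)^2$; hence $R_n^* \subseteq [1,n^2]$ and $R_{g,n} \subseteq [\,[2(g-n)^2-(g-n)]+1,\ [2(g-n)^2-(g-n)]+n^2\,]$. Comparing the intervals attached to two successive values of $n$ reduces disjointness to finitely many inequalities, each asserting that a fixed quadratic in $n$ is smaller than $4(g-n)$, all of which hold for $1 \le n \le \ell$ once $g$ is large in terms of $\ell$; this is the role of conditions~(1)--(2), equivalently of the preceding Lemma, and it yields one of the bounds defining $g_\ell$. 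Disjointness of $\{2g^2-g\}$ from the blocks is then immediate, since $\max R_{g,1} = [2(g-1)^2-(g-1)]+1 < 2g^2-g$, which is Theorem~\ref{first_gaps}(1).

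The converse is the crux. Let $\rho \in R_g \cap I$, say $\rho = \rho(X)$, and use the Poincar\'e decomposition to write $X \sim A \times E^s$ with $s = s(X)$, $\dim A = g-s$, and $s(A)=0$; by Proposition~\ref{splitting}, $\rho = (2s^2-s) + \rho(A)$ with $\rho(A) \in R_{g-s}^*$. If $s = g$ then $\rho = 2g^2-g$ and we are done; if $s = 0$ then $\rho = \rho(A) \le g^2$, which is strictly less than $[2(g-\ell)^2-(g-\ell)]+1$ for $g \ge g_\ell$ (condition~(1) for $n=\ell$), contradicting $\rho \in I$. So $1 \le s \le g-1$; setting $n := g-s$ we have $1 \le n \le g-1$ and $\rho \in R_{g,n}$. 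It remains to see that $n \le \ell$. Since $\rho \le \max R_{g,n} \le \varphi(n)$ with $\varphi(n) := [2(g-n)^2-(g-n)]+n^2$, it suffices to prove $\varphi(n) < [2(g-\ell)^2-(g-\ell)]+1$ for every $n$ with $\ell+1 \le n \le g-1$. Regarded as a function of a real variable, $\varphi$ is a parabola opening upwards with vertex at $(4g-1)/6$, so on $[\ell+1,g-1]$ its maximum is attained at an endpoint; and both $\varphi(\ell+1) < [2(g-\ell)^2-(g-\ell)]+1$ and $\varphi(g-1) = (g-1)^2+1 < g^2 < [2(g-\ell)^2-(g-\ell)]+1$ reduce to inequalities valid as soon as $g$ exceeds an explicit threshold depending only on $\ell$. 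Taking $g_\ell$ to be the maximum of this threshold and the one from the disjointness step, $n \le \ell$ follows, and the converse is complete.

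The step I expect to be the main obstacle is this last one: one must exclude the possibility that a nearly maximal supersingular part (large $s$, so $n$ close to $g$) combined with a large Picard number on the complementary factor produces a value inside $I$ but outside every $R_{g,n}$ with $n \le \ell$. Because $\varphi$ is not monotone on $[\ell+1,g-1]$, both endpoints $n=\ell+1$ and $n=g-1$ genuinely have to be checked, and the two inequalities they produce are precisely what fix $g_\ell$; everything else amounts to routine bookkeeping with additivity of the Picard number (Proposition~\ref{splitting}) and the elementary bound $R_n^* \subseteq [1,n^2]$.
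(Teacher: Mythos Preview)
Your proof is correct and is precisely the adaptation of the characteristic-zero argument that the paper has in mind: the paper omits the proof entirely, saying only that it ``follows the lines of the proof of \cite[Theorem~7.4]{hulek-laface17}'', and your three-step structure (realizability of each block, disjointness via the interval bounds $R_n^*\subseteq[1,n^2]$, and the converse via the decomposition $X\sim A\times E^{s(X)}$ together with the parabola $\varphi(n)=[2(g-n)^2-(g-n)]+n^2$) is exactly that argument transported to positive characteristic by replacing the CM-elliptic-curve factor with the supersingular factor $E^{s(X)}$. One small remark: your disjointness inequality $(n+1)^2<4(g-n)-2$ is the one that actually matches the Lemma preceding the theorem (giving the bound $n\le -3+\sqrt{4g+6}$); the paper's displayed condition~(2) has $(n+1)$ rather than $(n+1)^2$, which appears to be a typo there, so your version is the right one.
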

	
For sake of brevity, we will not report the proof of the theorem above, since it follows the lines of the proof of \cite[Theorem 7.4]{hulek-laface17}. 

\subsection{Structure of abelian varieties with large Picard number}
We will now remark that a striking consequence of Theorem \ref{distribution} is a description of the structure of abelian varieties of large Picard number. First, we can describe the isogeny classes of abelian varieties of large Picard number: the proof is analogous to the one of \cite[Corollary 7.6]{hulek-laface17}, and thus we omit it.

	\begin{cor}\label{s_vs_rho}
		For every positive integer $\ell$, there exists a genus $g_\ell$ such that for all $g \geq g_\ell$ and $n \leq \ell$ the following conditions are equivalent:
		\begin{enumerate}
			\item $\rho(X) \in R_{g,n}$;
			\item $s(X) = g-n$.
		\end{enumerate}
	\end{cor}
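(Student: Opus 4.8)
The corollary should follow from Theorem \ref{distribution} by a bookkeeping argument: once $g$ is large enough (relative to $\ell$), the sets $R_{g,n}$ for $n=1,\dots,\ell$, together with the singleton $\{2g^2-g\}$, partition the interval $\big[ [2(g-\ell)^2-(g-\ell)]+1, 2g^2-g\big]\cap R_g$, and they are \emph{pairwise disjoint}. So the statement $\rho(X)\in R_{g,n}$ determines $n$ uniquely among $\{1,\dots,\ell\}$, and the job is to match this $n$ with the supersingularity index. First I would set $g_\ell$ to be the genus produced by Theorem \ref{distribution} (possibly enlarged so that the numerical conditions (1), (2) of the previous subsection hold for all $n\leq\ell$), and fix $g\geq g_\ell$ and $n\leq\ell$.

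\textbf{The implication (2) $\Rightarrow$ (1).} Suppose $s(X)=g-n$. Then in the Poincaré decomposition $X\sim E^{g-n}\times A$ with $\Hom(E,A)=0$ and $\dim A=n$, where $A$ has no supersingular subvariety, i.e.\ $s(A)=0$. By additivity (Proposition \ref{splitting}), $\rho(X)=\rho(E^{g-n})+\rho(A)=[2(g-n)^2-(g-n)]+\rho(A)$, and $\rho(A)\in R_n^*$ by definition of $R_n^*$. Hence $\rho(X)\in R_{g,n}$, which is exactly (1).

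\textbf{The implication (1) $\Rightarrow$ (2).} Suppose $\rho(X)\in R_{g,n}$, so $\rho(X)=[2(g-n)^2-(g-n)]+x$ with $x\in R_n^*$. Write $s:=s(X)$, so $X\sim E^{s}\times A$ with $s(A)=0$, $\dim A=g-s$, and by additivity $\rho(X)=[2s^2-s]+\rho(A)$ with $\rho(A)\leq(g-s)^2$ (Propositions \ref{splitting}, \ref{self-product} applied to the simple factors of $A$, since $A$ has vanishing supersingularity index it behaves as over $\C$). The key point is that $\rho(X)$ lies in the interval covered by Theorem \ref{distribution}: indeed $\rho(X)\geq[2(g-n)^2-(g-n)]+1$ since $x\geq 1$ (the very general elliptic curve already forces $R_n^*\ni 1$), so $\rho(X)$ falls in the described disjoint union. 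Now I would argue that $s$ cannot be either too small or too large. If $s\leq g-n-1$, then $\rho(X)\leq[2(g-n-1)^2-(g-n-1)]+(g-n-1)^2$, and the numerical condition (2) forces this to be $<[2(g-n)^2-(g-n)]+1\leq\rho(X)$, a contradiction — so one needs a bound like $\rho(E^s\times A)\leq M_{g-s,g}$ via Proposition \ref{bound_length}, and the fact that $M_{g-s,g}$ is strictly increasing in $s$ to conclude $M_{g-s,g}<[2(g-n)^2-(g-n)]+1$ for $s\leq g-n-1$ once $g\gg 0$. Conversely if $s\geq g-n+1$, then $\rho(X)\geq[2s^2-s]+1\geq[2(g-n+1)^2-(g-n+1)]+1$, which by condition (1)/(2) exceeds the top of $R_{g,n}$ and lands in $R_{g,n'}$ for some $n'<n$ (or in $\{2g^2-g\}$); since the $R_{g,n'}$ are pairwise disjoint this contradicts $\rho(X)\in R_{g,n}$. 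Hence $s=g-n$, i.e.\ (2) holds.

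\textbf{Main obstacle.} The routine part is the additivity bookkeeping; the delicate part is choosing $g_\ell$ so that the ``too small'' and ``too large'' cases for $s$ are genuinely excluded by the disjointness in Theorem \ref{distribution}. Concretely, one must verify that for $s\leq g-n-1$ the largest achievable Picard number $M_{g-s,g}$ stays strictly below $[2(g-n)^2-(g-n)]+1$; this is where the quadratic estimates of Proposition \ref{bound_length} and the strict monotonicity $M_{g,g}<\cdots<M_{1,g}$ enter, and where the threshold $g_\ell$ (growing with $\ell$) comes from. I expect this numerical separation — rather than any structural input — to be the only real content, and it is precisely the content already packaged in Theorem \ref{distribution}, so in the write-up it suffices to invoke that theorem and the pairwise disjointness of the $R_{g,n}$ together with the numerical conditions (1), (2).
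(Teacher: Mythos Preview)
Your approach is correct and is the natural argument; the paper itself omits the proof entirely, deferring to the analogous result over $\C$ in the companion paper, so there is no detailed comparison to make beyond confirming that your bookkeeping from Theorem \ref{distribution} is the intended route.

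One technical slip worth fixing: in the ``$s$ too small'' branch you invoke $M_{g-s,g}$ from Proposition \ref{bound_length}, but that quantity bounds the Picard number in terms of the \emph{length} $r(X)$, not the supersingularity index, and $r(X)$ is not determined by $s(X)$. The bound you actually want is immediate from Propositions \ref{splitting} and \ref{self-product}: if $s(X)=s$ and $X\sim E^s\times A$ with $s(A)=0$, then $\rho(X)=[2s^2-s]+\rho(A)\le[2s^2-s]+(g-s)^2$. Maximising $\phi(s)=[2s^2-s]+(g-s)^2$ over $0\le s\le g-\ell-1$ (the endpoints give $g^2$ and $[2(g-\ell-1)^2-(g-\ell-1)]+(\ell+1)^2$ respectively) and comparing with $[2(g-\ell)^2-(g-\ell)]+1$ gives the needed inequality once $g_\ell$ is chosen large enough --- and this numerical separation is precisely what underlies the proof of Theorem \ref{distribution}, as you suspected. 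With that correction the write-up is clean: (2)$\Rightarrow$(1) is the additivity computation you gave; for (1)$\Rightarrow$(2) one observes $\rho(X)\in R_{g,g-s(X)}$ whenever $1\le s(X)\le g-1$, uses the disjointness in Theorem \ref{distribution} when $1\le g-s(X)\le\ell$, uses the bound above to exclude $g-s(X)>\ell$, and notes that $s(X)=g$ is ruled out since $2g^2-g\notin R_{g,n}$ for $n\ge1$ (again by the disjointness in Theorem \ref{distribution}).
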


This result describes the isogeny structure of abelian varieties of large Picard number. Namely, under the assumption of the theorem, $\rho(X) \in R_{g,n}$ is equivalent to $X$ being isogenous to a product $E^{g-n} \times A_n$, where $A_n$ is an abelian variety of dimension $n$ with $\Hom(E,A_n) =0$ (in particular, $s(A_n) = 0$).

Moreover, the Picard number also affects the $p$-rank. In order to see this, recall that for an abelian variety $X/k$ of dimension $g$, $X[p](k) \cong ( \Z / p \Z )^f$ for some integer $f$ satisfying $0 \leq f \leq g$. The integer $f$ is called the \textit{$p$-rank} of $X$. If $X$ is supersingular, then $X[p](k) = 0$. However, outside of dimension one and two, the converse is false: for every $g \geq 3$, there exists an abelian variety $X$ of dimension $g$ that is not supersingular and has no $p$-torsion $k$-rational points \cite[Ch. 0.6]{li-oort98}.

\begin{prop}
	For every positive integer $\ell$, there exists a genus $g_\ell$ such that for all $g \geq g_\ell$ and $n \leq \ell$ the following holds: if $\rho(X) \in R_{g,n}$, then for the $p$-rank $f$ of $X$ we have that $f \leq n$.
\end{prop}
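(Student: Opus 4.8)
The plan is to deduce everything from the isogeny structure supplied by Corollary \ref{s_vs_rho}, and then to invoke only the elementary behaviour of the $p$-rank under isogeny and under products.

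First, I would fix $\ell$ and take $g_\ell$ to be (at least) the genus furnished by Corollary \ref{s_vs_rho}; then for $g \geq g_\ell$ and $n \leq \ell$ the hypothesis $\rho(X) \in R_{g,n}$ is equivalent to $s(X) = g-n$, and hence, as noted in the discussion following that corollary, $X$ is isogenous to a product $E^{g-n} \times A_n$, where $E$ is our fixed supersingular elliptic curve, $\dim A_n = n$, and $\Hom(E,A_n)=0$ (so in particular $s(A_n)=0$). This reduces the problem to bounding the $p$-rank of $A_n$.

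Next, I would recall two standard facts about the $p$-rank $f$. It is additive on products, i.e. $f(Y \times Z) = f(Y)+f(Z)$, which is immediate from $(Y \times Z)[p](k) \cong Y[p](k) \times Z[p](k)$; and it is an isogeny invariant, since $f$ can be read off from the $p$-divisible group of the abelian variety (equivalently, $f$ is the multiplicity of the slope $0$ in the Newton polygon), and these are preserved by isogeny. Applying both facts to $X \sim E^{g-n} \times A_n$ gives $f(X) = (g-n)\,f(E) + f(A_n)$.

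Finally, since $E$ is supersingular we have $f(E)=0$, so that $f(X) = f(A_n)$; and since $\dim A_n = n$, the general bound $0 \leq f(A_n) \leq \dim A_n$ yields $f(X) \leq n$, as claimed. I do not expect any serious obstacle here: the only point requiring a word of care is that Corollary \ref{s_vs_rho} is being used with a genus that may have to be enlarged, but as we only ever impose finitely many inequalities this is harmless and one may take the very same $g_\ell$ as in that corollary.
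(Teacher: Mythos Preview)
Your argument is correct and is essentially the same as the paper's: both use Corollary \ref{s_vs_rho} to write $X \sim E^{g-n} \times A_n$ and then observe that the supersingular factor contributes nothing to $X[p](k)$, whence $f(X)=f(A_n)\leq n$. You have simply spelled out the isogeny-invariance and additivity of the $p$-rank explicitly, which the paper leaves implicit in its one-line proof.
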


\begin{proof}
	By definition of supersingular elliptic curve, the supersingular part of a Poincar\'e isogeny decomposition of $X$ does not contribute to $X[p](k)$.
\end{proof} 

Except for a few cases, all values of the $p$-rank do occur. More precisely:

\begin{itemize}
	\item if $\rho(X) = 2g^2-g$, i.e.~$X$ is supersingular, then clearly $f(X) = 0$;
	
	\item if $\rho(X) \in R_{g,1}$, then $f(X) =1$ in this case, because an elliptic curve is supersingular if and only if its $p$-rank vanishes;
	
	\item if $\rho(X) \in R_{g,2}$, then $f(X) \in \lbrace 1,2 \rbrace$ and both cases do occur;
	
	\item if $\rho(X) \in R_{g,n}$ ($n \geq 3$), then $f(X) \in \lbrace 0, 1, \dots, n\rbrace$ and all cases do occur.
\end{itemize}

Here, we have used the following facts:

\begin{itemize}
	\item $\dim \ka_g = \frac{1}{2}g(g+1)$, $\ka_g$ being the moduli space of principally polarized abelian varieties\footnote{As is well-known, every $g$-dimensional abelian variety is isogenous to a principally polarized one.} of dimension $g$;
	\item $\dim \ks_{g,1} = \big[g^2/4\big]$ (integral part of $g^2/4$), where $\ks_{g,1}$ is the subscheme of $\ka_g$ consisting of supersingular abelian varieties;
	\item if $V_f$ is a connected component of the subscheme of $\ka_g$ consisting of abelian varieties with $p$-rank at most $f$, then $\dim V_f = \frac{1}{2}g(g+1) -g+f$.
\end{itemize}

We also would like to observe that the Newton polygon of an abelian variety has a very special structure when the Picard number is large. Indeed, we need only look at the Newton polygon $\Nwt_H$ of the $F$-crystal $H:=\rHcr{1}(X/W)$, as there are $F$-crystal isomorphisms $\bigwedge^n \rHcr{1}(X/W) \cong \rHcr{n}(X/W)$ for all $n$ (here $W = W(k)$ is the ring of Witt vectors). Moreover, the Newton polygon is invariant under isogenies, and thus we can use Corollary \ref{s_vs_rho} to list the Newton polygons of abelian varieties of large Picard number. Indeed, if $X$ has large Picard number $\rho(X) \in R_{g,n}$, we may assume $X=E^{g-n} \times A$, $E$ being a supersingular elliptic curve and $A$ an abelian variety with vanishing supersingularity index. Then, by the K\"unneth formula, 
\[ \rHcr{1}(X/W) \cong \rHcr{1}(E/W)^{\oplus(g-n)} \oplus \rHcr{1}(A/W),\]
and we can deduce the structure of $\Nwt_H$ from that of the $\Nwt_{\rHcr{1}(E/W)}$ and $\Nwt_{\rHcr{1}(A/W)}$. 

\section{Further remarks and comments}\label{pathologies}

\subsection{Dependence on the characteristic}

Now, we would like to compare Proposition \ref{self-product} to the corresponding result in characteristic zero \cite[Corollary 2.5]{hulek-laface17}. For Type IV, the bound in positive characteristic is sensibly weaker than the one over the complex numbers, and in fact it is the best possible. 

To see this, let us restrict ourselves to self-products of an abelian variety $A$ of dimension $n \geq 2$. In this situation, for Type IV, one has that $\rho(A^k) \leq nk^2$ when working over $\C$, while in characteristic $p >0$ the weaker bound $\rho(A^k) \leq (nk)^2$ holds and the value $\rho(A^k) = (nk)^2$ can be attained for abelian varieties of CM-type (cf. \cite[IV, \S{19}]{mumford08}).	

More concretely, in characteristic zero, $\rho(X) = g^2$ if and only if 
$X$ is isogenous to $E^g$, $E$ being an elliptic curve with complex multiplication \cite{katsura75}. On the other hand, in positive characteristic, we will now show that certain $g$-dimensional abelian varieties of CM-type have Picard number $\rho=g^2$. In fact, we can restrict ourselves to the case of a simple abelian variety.

Let $A$ be a $g$-dimensional simple abelian variety of CM-type, having Picard number $\rho(A) = g^2$. Then, it is straightforward to see that it necessarily has endomorphism algebra of Type $\rm{IV}(e_0,d)$ with $e_0 = 1$ and $d=g$. This case never occurs in characteristic zero thanks to the restrictions in Subsection \ref{restriction}, and by \cite[Remark 5.4]{oort88} the situation in positive characteristic depends on the characteristic of the base field. More precisely,

\begin{itemize}
	\item if $p$ is not-split in $D/\Q$ of Type $\rm{IV}(1,g)$, then $D$ does not occur as endomorphism algebra unless $g=1$;
	\item if $p$ is split in $D/\Q$ instead, then it can occur: in fact, for any choice of $0 < m < n$ with $(m,n)=1$ and $m+n=g$, there exists an abelian variety of dimension $g$ with endomorphism algebra of Type $\rm{IV}(1,g)$. 
\end{itemize}

This shows yet another pathology: given a positive integer $g$ and an Albert algebra $(D,*)$ satisfying the restrictions in Subsection \ref{restriction}, the existence of a simple abelian variety with endomorphism algebra isomorphic to $D$ depends on the properties of $p$ as an element of $D$.

\subsection{Non-additivity of the $R_g$'s}\label{6.2}

Let us go back to the computation of $R_4$. As $R_2 = \lbrace 1,2,3,4,6 \rbrace$, if we were in characteristic zero, we could immediately conclude that $12 \in R_{4}$ by \cite[Proposition 6.1]{hulek-laface17}. However, this is not the case in characteristic $p>0$: indeed, $\rho(A) =6$ if and only if $A$ is a supersingular abelian surface, and in order to produce $12 \in R_4$ one would naively take a product $A \times A'$ of two such surfaces. Unfortunately, these are always isogenous, hence $\rho(A \times A') = 28$. This shows yet another pathological behavior of the Picard number.

In fact, this phenomenon is not restricted to dimension four, but it occurs in every dimension $g \geq 4$. For $g \geq 7$, one has that $R_{g-2} + R_2 \nsubseteq R_g$: indeed, the largest Picard number in dimension $g-2$ is $2(g-2)^2 - (g-2)$, while it is $6$ in dimension two. Additivity would imply that $\big[2(g-2)^2 -(g-2)\big]+6$, which contradicts Theorem \ref{first_gaps}(2). In lower dimension, one can compute $R_g$ directly and see that the same argument applies, namely $21 \notin R_5$ (but $15 \in R_3$ and $6 \in R_2$) and $34 \notin R_6$ (whereas $28 \in R_4$ and $6 \in R_2$). In conclusion, this provides counterexamples to the additivity of the range of Picard numbers in positive characteristic in all dimensions $g \geq 4$.

\subsection{Yet another obstruction to addivity of the $R_g$'s}

We have just seen that one obstruction to an analogue of \cite[Proposition 6.1]{hulek-laface17} in positive characteristic is the presence of supersingular abelian varieties. However, at a closer look, this is not an crucial issue if one's aim is to describe the structure of $R_g$ recursively as in \cite[Section 6]{hulek-laface17}. We have already seen a hint to a possible fix in Subsection \ref{str_AV_large}: letting $R_g^*$ be the subset of $R_g$ consisting of those Picard numbers that can be attained by using abelian varieties with vanishing supersingularity index, one can formulate the following conjecture.

\begin{conj}
	The set of Picard numbers of $g$-dimensional abelian varieties can be described as follows:
	\begin{align*}	
	 R_g &= \bigcup_{k \vert g} \big\lbrace \rho(A^k) \, \vert \, \text{$A$ simple}, \ \dim A = g/k \big\rbrace \cup\\ 
	 & \bigcup_{1 \leq n \leq g-1} \big(R_n^* + R_{g-n}^* \big) \cup \bigcup_{1 \leq n \leq g-1} \big(\lbrace 2n^2-n \rbrace + R_{g-n}^* \big).
	 \end{align*}
\end{conj}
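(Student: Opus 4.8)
The plan is to prove the two inclusions separately. The inclusion ``$\supseteq$'' is largely routine, while ``$\subseteq$'' follows from the reduction machinery of Section 2; I will argue that the genuine difficulty, and the reason the statement is only a conjecture, lies in one piece of the ``$\supseteq$'' direction.

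For ``$\subseteq$'', I would take a Poincar\'e decomposition $X \sim X_1^{n_1} \times \cdots \times X_r^{n_r} \times E^s$ with $\Hom(X_i,E)=\Hom(X_i,X_j)=0$ for $i \neq j$ and $s=s(X)$ the supersingularity index, and distinguish cases. If $s=g$, then $X \sim E^g$ and $\rho(X)=2g^2-g=\rho(E^g)$ lies in the first union (with $k=g$, $A=E$). If $s=0$ and $r=1$, then $X \sim X_1^{n_1}$ with $X_1$ simple of dimension $g/n_1$, so $\rho(X)$ again lies in the first union. If $s=0$ and $r \geq 2$, write $X \sim A_1 \times A_2$ with $A_1=X_1^{n_1}$, $A_2=X_2^{n_2}\times\cdots\times X_r^{n_r}$ and $\Hom(A_1,A_2)=0$; since no simple factor $X_i$ is supersingular, both $A_1$ and $A_2$ have vanishing supersingularity index, and Proposition \ref{splitting} gives $\rho(X)=\rho(A_1)+\rho(A_2) \in R_{\dim A_1}^*+R_{\dim A_2}^*$ with $1 \leq \dim A_1 \leq g-1$, so $\rho(X)$ lies in the second union. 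Finally, if $1 \leq s \leq g-1$, set $A:=X_1^{n_1}\times\cdots\times X_r^{n_r}$: then $A$ has dimension $g-s$ and vanishing supersingularity index, $\Hom(A,E^s)=0$, and Propositions \ref{splitting} and \ref{self-product} give $\rho(X)=\rho(A)+\rho(E^s)=\rho(A)+(2s^2-s) \in \{2s^2-s\}+R_{g-s}^*$, so $\rho(X)$ lies in the third union. This exhausts all cases.

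For ``$\supseteq$'', the first union is contained in $R_g$ by definition, since $A^k$ has dimension $g$. For the third union, given $b \in R_{g-n}^*$ realized by a $(g-n)$-dimensional abelian variety $B$ with $s(B)=0$, one has $\Hom(E,B)=0$ (no simple factor of $B$ is isogenous to $E$), so $\rho(E^n \times B)=(2n^2-n)+b$ by Propositions \ref{splitting} and \ref{self-product}; hence $\{2n^2-n\}+R_{g-n}^* \subseteq R_g$. The difficulty concentrates in the second union: given $a \in R_n^*$ and $b \in R_{g-n}^*$ with realizations $A$ and $B$, the naive candidate $A \times B$ has Picard number $a+b$ only when $A$ and $B$ share no simple isogeny factor, because the Picard number is additive but not strongly additive (Propositions \ref{splitting} and \ref{murty}). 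One would therefore want to realize $a$ by some $n$-dimensional abelian variety of vanishing supersingularity index whose finitely many simple factors avoid those of $B$ --- for instance by replacing a simple factor of $A$ with endomorphism algebra $D$ by another simple abelian variety of the same dimension with endomorphism algebra $\cong D$ in a different isogeny class, or by deforming a factor with $\End = \Z$.

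The main obstacle is exactly this last point: it requires, for each endomorphism type appearing in a realization of $a$, the existence of infinitely many isogeny classes of simple abelian varieties of the prescribed dimension and endomorphism algebra in characteristic $p$. As stressed in the introduction, such existence (let alone density) statements are not available in general, and for certain Albert algebras even the occurrence of a single such abelian variety depends on how $p$ splits (cf.~Subsections \ref{6.2} and \ref{restriction}, and \cite{oort88}). A complete proof of the conjecture therefore seems to require either new existence results for abelian varieties with prescribed endomorphism algebra, or a case-by-case treatment of the ``rigid'' algebras; I expect this to be the crux, whereas ``$\subseteq$'' and the easy half of ``$\supseteq$'' can be written out using only Propositions \ref{splitting} and \ref{self-product}.
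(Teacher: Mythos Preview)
Your analysis is correct and matches the paper's own treatment: the statement is a conjecture, not a theorem, and the paper does not prove it either. The paper observes that the conjecture would follow from Conjecture~\ref{countably_many} (countably many isogeny classes realizing each $\rho \in R_g^{(p)}$) and then explains that the obstruction to this is precisely the lack of existence results for simple abelian varieties with prescribed endomorphism algebra in characteristic $p>0$---exactly the obstacle you isolate in the second union of the ``$\supseteq$'' direction. Your explicit verification of ``$\subseteq$'' and of the first and third unions in ``$\supseteq$'' via Propositions~\ref{splitting} and~\ref{self-product} is a clean write-up of what the paper leaves implicit.
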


This makes sense as the integers in $R_g^*$ morally behave as the Picard numbers of complex abelian varieties. Also, notice that we have excluded the counterexamples of Subsection \ref{6.2}. In fact, the conjecture above would follow from:

\begin{conj}[Analogue of Proposition 6.3 in \cite{hulek-laface17} in positive characteristic]\label{countably_many}
	Let $g \geq 1$ and let $\rho \in R_g^{(p)}$ ($p>0$). Then, there exist countably many isogeny classes of abelian varieties of dimension $g$ and Picard number $\rho$.
\end{conj}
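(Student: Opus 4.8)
The statement to prove is Conjecture~\ref{countably_many}: for any $g \geq 1$ and any $\rho \in R_g^{(p)}$, there are only countably many isogeny classes of $g$-dimensional abelian varieties in characteristic $p$ with Picard number $\rho$. The natural approach is to observe that the hypothesis ``Picard number $\rho$'' is irrelevant to the countability and that the real statement is simply that there are at most countably many isogeny classes of abelian varieties of a fixed dimension $g$ over algebraically closed fields of characteristic $p$. So my plan is to reduce everything to this countability of the set of isogeny classes, and then prove that countability.

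\emph{Step 1: reduction to a single algebraically closed field.} An abelian variety $X$ of dimension $g$ over an algebraically closed field $K$ of characteristic $p$ is defined over a finitely generated subfield $K_0 \subseteq K$; enlarging $K_0$ we may assume $K_0$ is perfect, or at least we may base-change to $\overline{K_0}$. Since the Picard number is an isogeny invariant and does not change under further algebraically closed extensions (N\'eron--Severi groups are insensitive to extension of algebraically closed base fields for abelian varieties), and since every finitely generated field of characteristic $p$ embeds into $\Omega := \overline{\IF_p(t_1, t_2, \dots)}$ (a fixed algebraically closed field of characteristic $p$ and countable transcendence degree), it suffices to bound the number of isogeny classes of $g$-dimensional abelian varieties over this fixed field $\Omega$. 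The point is that $\Omega$ has cardinality $\aleph_0$.

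\emph{Step 2: countability of isogeny classes over $\Omega$.} The scheme $\ka_g$ of principally polarized abelian varieties of dimension $g$ is of finite type over $\IF_p$ (or over $\Z[1/N]$ with a level structure, but one descends), hence has only countably many $\Omega$-points: indeed a finite-type $\IF_p$-scheme has countably many points valued in a countable field, and the $\Omega$-points are obtained from points of $\ka_g$ (Zariski points, of which there are countably many) together with a field embedding of the residue field into $\Omega$ (of which, for each point, there are at most countably many since the residue field is finitely generated and $\Omega$ is countable). Every abelian variety of dimension $g$ is isogenous to a principally polarized one (footnoted already in the excerpt), so every isogeny class of $g$-dimensional abelian varieties over $\Omega$ contains a point of $\ka_g(\Omega)$, whence there are at most countably many isogeny classes; intersecting with the condition $\rho(X) = \rho$ can only shrink this countable set. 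This proves the statement.

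\emph{Main obstacle.} The only subtle point is Step~1, namely making sure that the passage from an arbitrary algebraically closed field $K$ to the fixed countable field $\Omega$ does not lose or alter isogeny classes relevant to a given $\rho$: one must check that for abelian varieties, both the existence of an isogeny and the value of the Picard number are stable under extension of algebraically closed fields, so that ``isogeny class with Picard number $\rho$ over $K$'' corresponds bijectively to ``isogeny class with Picard number $\rho$ over $\Omega$'' once one has descended to a common finitely generated field. This is standard (spreading out and specialization arguments, or the fact that $\NS$ of an abelian variety is computed by homomorphisms of abelian varieties, which form a finitely generated group insensitive to algebraically closed extensions), but it should be stated carefully. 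Everything else is routine cardinality bookkeeping on finite-type schemes. One could alternatively phrase Step~2 using the Lang--N\'eron / finiteness results on abelian varieties over finitely generated fields, but the $\ka_g$-point-count is the cleanest.
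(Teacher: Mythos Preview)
You have misread the statement. The paper does not prove Conjecture~\ref{countably_many}; it explicitly leaves it open. More importantly, the phrase ``there exist countably many isogeny classes'' is meant in the sense of \emph{countably infinite}, not ``at most countably many''. This is clear from three things: (i) the statement is labelled a conjecture rather than a proposition, (ii) the paper says that proving it would require \emph{constructing} abelian varieties of prescribed endomorphism type, and (iii) it is stated as the analogue of a result in characteristic zero whose purpose is to guarantee that, given $\rho_1 \in R_{g_1}$ and $\rho_2 \in R_{g_2}$, one can always find representatives $A_1, A_2$ with $\Hom(A_1,A_2)=0$, so that $\rho(A_1 \times A_2) = \rho_1 + \rho_2$. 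That application needs \emph{infinitely many} isogeny classes with a given Picard number, not a countable upper bound.

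Your argument correctly shows that for any fixed $g$ there are at most countably many isogeny classes of $g$-dimensional abelian varieties in characteristic $p$ (descend to a countable algebraically closed field, use that $\ka_g$ is of finite type). That is true and not hard, but it is the trivial direction and would never be phrased as a conjecture. The substantive content of Conjecture~\ref{countably_many} is the \emph{lower} bound: for each realizable $\rho$, produce infinitely many non-isogenous $g$-dimensional abelian varieties with that Picard number. As the paper explains right after the statement, this reduces to a Type-by-Type analysis of simple abelian varieties, and the obstruction is that existence of simple abelian varieties with a prescribed Albert algebra (subject to the divisibility constraints of Section~\ref{restriction}) is not known in general in positive characteristic, and may even depend on the prime $p$. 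Your proposal does not engage with any of this, so it does not address the actual conjecture.
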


To prove Conjecture \ref{countably_many}, one can reduce to considering simple abelian varieties, and then carry on a Type-by-Type analysis. Unfortunately, here is where the theory of abelian varieties fails us: in fact, it is very hard to construct abelian varieties of given dimension and endomorphism algebra (satisfying the restrictions in Subsection \ref{restriction}). Moreover, it is not even known whether an endomorphism algebra of Type III or IV satisfying said restrictions always exists (this might also depend on $p$ as we have already seen earlier). Further results on this topic are surveyed in \cite{oort88} (in particular, \cite[\S 8]{oort88} contains a quick list of all known cases and open questions).

\subsection{Abelian varieties over finite fields}

By \cite[Corollary 8.2]{hulek-laface17}, it is known that, for every positive integer $g$, every realizable Picard number $\rho \in R_g$ of a complex abelian variety can in fact be realized over a number field. The argument therein use the technique of spreading out plus some considerations on Galois representations. One might hope for an analogue in positive characteristic, where one replaces numbers fields by finite field. However, the argument above does not carry over to positive characteristic.

In fact, the obstruction to such a result is deeply substantial and comes from the Tate conjecture. Let us fix a positive integer $g$ and let $X$ be an abelian variety defined over a finite field $k$. Then, the Tate conjecture implies that the 2\textsuperscript{nd} Betti number has the same parity of the Picard number, that is $b_2(X) \equiv \rho(X) \md 2$. This has been first noticed by Swinnerton-Dyer, as explained by Artin in \cite{artin74} (see also \cite{shioda81}).

Nevertheless, one may wonder whether all $\rho \in R_g$ such that $\rho \equiv b_2 \md 2$ occur as Picard numbers of abelian varieties over finite fields. This seems to be a rather subtle question, and we will now try to illustrate why. 

Let $\ell \geq 3$ be a positive integer, and let $g_\ell$ be as in Theorem \ref{distribution}. Fix $g \geq g_\ell$, and let $\rho \in R_{g,n}$ with $g \equiv n \md 2$ and $n \geq 3$. If $X$ has Picard number $\rho= 2(g-n)^2 - (g-n)+1$, then $X \sim E^{g-n} \times A_n$ as in the comment right after Corollary \ref{s_vs_rho} and $\rho(A_n)=1$. Although, $E$ is indeed defined over $\IF_{p^2}$, $A_n$ can never be defined over a finite field. Indeed, by a result of Tate \cite[Theorem 2]{tate66},
\[ 2g \leq \dim \End(X) \otimes \Q \leq (2g)^2.\]
As $\rho(A_n)=1$, $A_n$ must be simple and its endomorphism algebra is either $\Q$, a quaternion algebra over $\Q$ or a quadratic imaginary field, all of which contradict Tate's result. This implies that $E^{g-n} \times A_n$ is not defined over a finite field, but does not exclude that some other element in the isogeny class of $X$ might be.

\subsection{Noether-Lefschetz loci for abelian varieties ($\operatorname{char}$ $p \geq 0$)}

We would like to make a couple of remarks on (higher) Noether-Lefschetz loci for abelian varieties. In \cite[Remark 4.3]{hulek-laface17}, we had already notice that the moduli of complex abelian varieties does not behave as well as the moduli of K3 surface with respect to the Picard number. The reason for this was that the moduli information of an abelian variety $X/\C$ can be read off from a weight-one Hodge structure, while the information on the Picard number lives in degree two. A similar behavior occurs for abelian varieties in positive characteristic: this is a consequence of Theorem \ref{large_picard}. As a byproduct, we will now see that we do not have well-behaved (higher) Noether-Lefschetz loci ({NL loci} in short).

Let us fix a positive integer $g$ (which we take large enough, so that our consideration make sense, see Theorem \ref{large_picard}), and let $r \geq 1$ be another integer. One can naively define the following analogue of the Noether-Lefschetz loci: 

\[ \NL_g(r) := \big\lbrace [X] \in \ka_g \, \vert \, \rho(X) \geq r \big\rbrace.\]
Notice that, in any characteristic, the existence of gaps in the set of Picard numbers implies that many of these NL loci coincide. More precisely, if $r \notin R_g$ then $\NL_g(r)=\NL_g(r+1)$. Now we look at the situation for complex abelian varieties and some of the largest Picard numbers by making use of the results in \cite{hulek-laface17}. It is straightforward to see that $\NL_g(g^2)$ consists of singular abelian varieties, and $\dim \NL_g(g^2)=0$. Moving to the second largest Picard number, we have that 
\[\NL_g \big( (g-1)^2 +1 \big)= \NL_g(g^2) \cup \big\lbrace [X] \in \ka_g \, \vert \, X \sim E_\text{cm}^{g-1} \times F,\ E \not\sim F \big\rbrace,\]
where the subscript stands for \textit{complex multiplication}, and $\dim \NL_g \big( (g-1)^2 +1 \big) =1$. Next, 
\[\NL_g \big( (g-2)^2 +4 \big) = \NL_g \big( (g-1)^2 +1 \big) \cup \big\lbrace [X] \in \ka_g \, \vert \, X \sim E_\text{cm}^{g-2} \times F_\text{cm}^2,\ E \not\sim F \big\rbrace,\]
and also in this case $\dim  \NL_g \big( (g-2)^2 +4 \big) = 1$. Finally, we consider

\begin{align*}
	\NL_g \big( (g-2)^2 +3 \big) = \NL_g \big( (g-2)^2 +4 \big) &\cup \big\lbrace [X] \in \ka_g \, \vert \, X \sim E_\text{cm}^{g-2} \times F_\text{ord}^2\big\rbrace \\
	&\cup \big\lbrace [X] \in \ka_g \, \vert \, X \sim E_\text{cm}^{g-2} \times S_{\text{II}(1)}\big\rbrace,
\end{align*}
where $F_\text{ord}$ denotes an ordinary elliptic curve (that is $\End(E)\otimes \Q \cong \Q$) and $S_{\text{II}(1)}$ a simple abelian surface of Type II$(1)$; one can see that $\dim  \NL_g \big( (g-2)^2 +3 \big) = 1$ by results in \cite[Ch.~9]{birkenhake-lange04}. So we see that, although we are always adding new abelian varieties, the dimension of the NL loci might not increase upon lowering the bound on the Picard number.

However, if we are willing to bundle together certain Picard numbers, then we can get loci of $\ka_g$ that behaves quite nicely in this respect, at least if we work with large Picard numbers. Fix $\ell \in \IN$ and let $g_\ell$ be as in \cite[Theorem 7.4]{hulek-laface17}. Then, for $r \leq \ell$, we can define the following locus in $\ka_g$:
\begin{align*}
	\kl_g(r) :=& \big\lbrace [X] \in \ka_g \, \vert \, \rho(X) \in R_{g,n},\ \text{for some $n \leq r$} \big\rbrace.
\end{align*}
Equivalently, $\kl_g(r)$ is the locus in $\ka_g$ consisting of abelian varieties having Picard number $\rho \geq (g-r)^2 + 1$. By \cite[Corollary 7.6]{hulek-laface17}, it can alternatively be defined as the sublocus of $\ka_g$ of those abelian varieties containing a subvariety $Y$ isogenous to $E_\text{cm}^{g-r}$, $E_\text{cm}$ being again some elliptic curve with complex multiplication; in symbols: 
\[ \kl_g(r) = \big\lbrace [X] \in \ka_g \, \vert \, X \supset Y \sim E_\text{cm}^{g-r}\big\rbrace.\] 
We observe that $\kl_g(0) = \NL_g(g^2)$ and $\kl_g(1) = \NL_g \big( (g-1)^2+1 \big)$. Also, there is a chain of strict inclusions of subloci of $\ka_g$, namely
\[ \kl_g(0) \subsetneq \kl_g(1) \subsetneq \kl_g(2) \subsetneq \cdots, \subsetneq \kl_g(\ell -1 ) \subsetneq \kl_g(\ell).\] 
If $X \in \kl_g(r)$, then $X \sim E_\text{cm}^{g-r} \times A$, for some abelian variety $A$ of dimension $r$, and generically $\Hom(E_\text{cm},A)=0$. This means that $\kl_g(r)$ is a countable union of subvarieties of $\ka_g$ of dimension $\dim \ka_r$.

Turning to positive characteristic, the same approach works by virtue of Theorem \ref{large_picard}, Theorem \ref{distribution} and Corollary \ref{s_vs_rho}, and we can define the subloci $\kl_g(r)$ of $\ka_g$. However, counting dimensions yields that $\kl_g(r)$ is now a countable union of subvarieties of dimension $\dim \ks_{g-r,1} + \dim \ka_r$.

When $r > \ell$, the loci $\kl_g(r)$ do not behave as nicely, since the sets of the form $R_{g,r}$ may intersect each other and hence there are ambiguities on the structure of the corresponding abelian varieties up to isogenies.

\subsection{$R_g$ vs.~$R_g^*$}

Recall that in Section \ref{asymp_res}, we have introduced the subset $R_g^*$ of Picard numbers of abelian varieties of dimension $g$ and vanishing supersingularity index. For sake of completeness, we would like to report a few computations on $R_g$ and $R_g^*$, so that the reader can convince him- or herself that these sets differ more and more as $g \ra +\infty$. We have not proved this rigorously, but we do expect that $\#(R_g \setminus R_g^*) \ra +\infty$ as $g \ra +\infty$. In the following, the numbers in bold indicate Picard numbers that can be achieved with abelian varieties with nontrivial supersingularity index only (i.e. they constitute $R_g \setminus R_g^*$), and thus $R_g^*$ consists of the numbers not in bold.

\begin{align*}
	R_2 &= \lbrace 1,2,3,4,\bf{6} \rbrace \\
	R_3 &= \lbrace 1, \dots, 6, \bf{7}, \rm{9}, \bf{15} \rbrace \\
	R_4 &= \lbrace 1,\dots,7, \textbf{8}, \textbf{9}, 10, 16, \textbf{28} \rbrace \\
	R_5 &= \lbrace 1,\dots, 11, \textbf{12},15,\textbf{17},25,\textbf{29},\textbf{45} \rbrace \\
	R_6 &= \lbrace 1,\dots, 21, 22, \textbf{24},\textbf{26}, \textbf{29}, \textbf{\dots}, \textbf{32}, 36, \textbf{46}, \textbf{66} \rbrace
\end{align*}


\bibliographystyle{plain}
\bibliography{bib}{}

\hrulefill

\vspace{5mm}
\SMALL{
\noindent \textsc{Roberto Laface}
\\Technische Universit\"at M\"unchen
\\Zentrum Mathematik - M11 
\\85748 Garching bei M\"unchen (Germany)
\\ \textit{E-mail address: }{\tt laface@ma.tum.de}
}

\newpage

\end{document}